\newcounter{subeqn} %
\tikzset{double line with arrow/.style args={#1,#2}{decorate,decoration={markings,%
mark=at position 0 with {\coordinate (ta-base-1) at (0,1pt);
\coordinate (ta-base-2) at (0,-1pt);},
mark=at position 1 with {\draw[#1] (ta-base-1) -- (0,1pt);
\draw[#2] (ta-base-2) -- (0,-1pt);
}}}}
\tikzset{
    labl/.style={anchor=south, rotate=90, inner sep=.5mm}
}
\newtheorem{Theorem}{Theorem}
\newtheorem{Proposition}[Theorem]{Proposition}
\newtheorem{Lemma}[Theorem]{Lemma}
\newtheorem{Corollary}[Theorem]{Corollary}
\newtheorem{Remark}[Theorem]{Remark}
\newcommand{\nc}{\newcommand}
\newcommand{\renc}{\renewcommand}
\nc{\fa}{\mathfrak a}
\nc{\fb}{\mathfrak b}
\nc{\fg}{\mathfrak g}
\nc{\fk}{\mathfrak k}
\nc{\fh}{\mathfrak h}
\nc{\ft}{\mathfrak t}
\nc{\fw}{\mathfrak w}
\nc{\fM}{\mathfrak M}
\nc{\CC}{\mathbb{C}}
\nc{\GG}{\mathbb{G}}
\nc{\KK}{\mathbb{K}}
\nc{\NN}{\mathbb{N}}
\nc{\PP}{\mathbb{P}}
\nc{\QQ}{\mathbb{Q}}
\nc{\RR}{\mathbb{R}}
\nc{\ZZ}{\mathbb{Z}}
\nc{\cA}{\mathcal{A}}
\nc{\cB}{\mathcal{B}}
\nc{\cC}{\mathcal{C}}
\nc{\cD}{\mathcal{D}}
\nc{\cE}{\mathcal{E}}
\nc{\cF}{\mathcal{F}}
\nc{\cG}{\mathcal{G}}
\nc{\cI}{\mathcal{I}}
\nc{\cJ}{\mathcal{J}}
\nc{\cK}{\mathcal{K}}
\nc{\cL}{\mathcal{L}}
\nc{\cM}{\mathcal{M}}
\nc{\cN}{\mathcal{N}}
\nc{\cO}{\mathcal{O}}
\nc{\cP}{\mathcal{P}}
\nc{\cQ}{\mathcal{Q}}
\nc{\cR}{\mathcal{R}}
\nc{\cS}{\mathcal{S}}
\nc{\cT}{\mathcal{T}}
\nc{\cU}{\mathcal{U}}
\nc{\cV}{\mathcal{V}}
\nc{\cW}{\mathcal{W}}
\nc{\cX}{\mathcal{X}}
\nc{\cY}{\mathcal{Y}}
\nc{\cZ}{\mathcal{Z}}
\nc{\ba}{\mathbf{a}}
\nc{\bb}{\mathbf{b}}
\nc{\bc}{\mathbf{c}}
\nc{\bd}{\mathbf{d}}
\nc{\bg}{\mathbf{g}}
\nc{\bi}{\mathbf{i}}
\nc{\bj}{\mathbf j}
\nc{\bk}{\mathbf k}
\nc{\bm}{\mathbf{m}}
\nc{\bs}{\mathbf{s}}
\nc{\bt}{\mathbf{t}}
\nc{\bv}{\mathbf{v}}
\nc{\bw}{\mathbf{w}}
\nc{\bz}{\mathbf{z}}
\nc{\bA}{\mathbf A}
\nc{\bB}{\mathbf B}
\nc{\bF}{\mathbf F}
\nc{\bG}{\mathbf G}
\nc{\bL}{\mathbf{L}}
\nc{\bM}{\mathbf{M}}
\nc{\bN}{\mathbf{N}}
\nc{\bP}{\mathbf{P}}
\nc{\bR}{\mathbf{R}}
\nc{\bS}{\mathbf{S}}
\nc{\bT}{\mathbf{T}}
\nc{\bU}{\mathbf{U}}
\nc{\bX}{\mathbf{X}}
\nc{\sB}{\mathscr{B}}
\nc{\sC}{\mathscr{C}}
\nc{\Ann}{\operatorname{Ann}}
\nc{\Aut}{\operatorname{Aut}}
\nc{\Coker}{\operatorname{Coker}}
\nc{\Der}{\operatorname{Der}}
\nc{\End}{\operatorname{End}}
\nc{\Ext}{\operatorname{Ext}}
\nc{\fmod}{\operatorname{--fmod}}
\nc{\gr}{\operatorname{gr}}
\nc{\Hom}{\operatorname{Hom}}
\renc{\Im}{\operatorname{Im}}
\nc{\Ind}{\operatorname{Ind}}
\nc{\Ker}{\operatorname{Ker}}
\nc{\MaxSpec}{\operatorname{MaxSpec}}
\nc{\Mod}{\operatorname{Mod}}
\nc{\op}{\operatorname{op}}
\nc{\Proj}{\operatorname{Proj}}
\nc{\Rep}{\operatorname{Rep}}
\nc{\Res}{\operatorname{Res}}
\nc{\Span}{\operatorname{Span}}
\nc{\Spec}{\operatorname{Spec}}
\nc{\Sym}{\operatorname{Sym}}
\nc{\Tor}{\operatorname{Tor}}
\nc{\tr}{\operatorname{tr}}
\nc{\val}{\operatorname{val}}
\nc{\wt}{\operatorname{wt}}
\nc{\arxiv}[1]{\href{http://arxiv.org/abs/#1}{\tt arXiv:\nolinkurl{#1}}}
\nc{\Cartan}{\CC[H_\bullet^{(\bullet)}]}
\nc{\con}{\sim}
\nc{\diam}{\diamond}
\nc{\eps}{\varepsilon}
\nc{\fsl}{\mathfrak{sl}}
\nc{\flavour}{G_W}
\nc{\gauge}{G_V}
\nc{\GL}{\operatorname{GL}}
\nc{\Gr}{\mathsf{Gr}}
\nc{\Grml}{\Gr_\mu^{\overline{\lambda}}}
\nc{\hh}{\hslash}
\nc{\id}{\operatorname{id}}
\nc{\leftexp}[2]{\vphantom{#2}^{#1} #2}
\nc{\Lie}{\operatorname{Lie}}
\nc{\nilHecke}{\mathcal{NH}_\bG}
\nc{\sslash}{/\mkern-6mu/}
\nc{\ssslash}{/\mkern-6mu/\mkern-6mu/}
\nc{\Stab}{\mathsf S}
\nc{\hooklongrightarrow}{\lhook\joinrel\longrightarrow}
\nc{\hooklongleftarrow}{\longleftarrow\joinrel\defparamok}
\nc{\twoheadlongrightarrow}{\relbar\joinrel\twoheadrightarrow}
\nc{\twoheadlongleftarrow}{\twoheadleftarrow\joinrel\relbar}
\nc{\acom}[1]{\todo[inline,color=green!20]{ Alex: #1 }}
\nc{\CB}{\cA^{\mathsf{sph}}}
\nc{\ICB}{\cA}
\nc{\ACB}{\cA^{\mathsf{ab}}}
\nc{\tCB}{\widetilde{\ICB}^{\mathsf{sph}}}
\nc{\tICB}{\widetilde{\ICB}}
\nc{\CBone}{\CB_{\hbar=1}}
\nc{\ICBone}{\ICB_{\hbar=1}}
\nc{\ACBone}{\ACB_{\hbar=1}}
\nc{\GK}{\bG_{\cK}}
\nc{\GO}{\bG_{\cO}}
\nc{\TK}{\bT((z))}
\nc{\TO}{\bT[[z]]}
\nc{\Iwa}{\cI}
\nc{\NK}{\bN_{\cK}}
\nc{\NO}{\bN_{\cO}}
\nc{\la}{\lambda}
\nc{\Ya}{{\reflectbox{\rm R}}}
\nc{\Yml}{Y_\mu^\lambda}
\nc{\FYml}{FY_\mu^\lambda}
\nc{\barX}{\bar{X}}
\nc{\barQ}{\bar{Q}}
\nc{\barP}{\bar{P}}
\nc{\excise}[1]{}
\nc{\Pol}{\mathsf{Pol}}
\nc{\PolKLR}{\mathsf{Pol}}
\nc{\yz}{z}
\nc{\YZ}{Z}
\nc{\codim}{\operatorname{codim}}
\nc{\cochar}{X_\ast(\bF)}
\nc{\ev}{\operatorname{ev}}
\nc{\basW}{\eps}
\nc{\basV}{e}
\nc{\DEF}{=}
\nc{\idem}{\mathsf e}
\nc{\idemCB}{\idem}
\nc{\eV}{w}
\nc{\eW}{z}
\nc{\subrep}[1]{U_{#1}}
\nc{\catT}[1]{\cT_{#1}}
\nc{\catR}[2]{{_{#1} \cR_{#2}}}
\nc{\CCB}[2]{{_{#1}\ICB_{#2}}}
\nc{\unit}{\mathbf{1}}
\nc{\sfu}{\mathsf{u}}
\nc{\shV}{\sfu}
\nc{\dom}{\ZZ^{\bv}_{\textsf{dom}}}
\nc{\laG}{{\boldsymbol{\lambda}}}
\nc{\muG}{{\boldsymbol{\mu}}}
\nc{\laQ}{\lambda}
\nc{\muQ}{\mu}
\nc{\Ymu}{Y_\mu[\eW_1,\ldots,\eW_N]}
\nc{\bimIK}{\mathcal{P}} 
\nc{\bimKI}{\mathcal{Q}} 
\nc{\Inc}{\mathsf{Inc}}
\nc{\Av}{\mathsf{Av}}
\nc{\objs}{\bt_\RR^{\circ}}
\nc{\paths}{\bt_\RR^{\bullet}}
\nc{\catya}{\sB^\bv}
\nc{\morphism}{\varphi}
\nc{\defparam}{\varkappa}
\title{Quiver gauge theories and symplectic singularities}
\author{Alex Weekes}
\date{}
\begin{document}
\begin{abstract}
Braverman, Finkelberg and Nakajima have recently given a mathematical construction of the Coulomb branches for a large class of $3d$ $\cN =4$ gauge theories, as algebraic varieties with Poisson structure.  They conjecture that these varieties have symplectic singularities.  We confirm this conjecture for all quiver gauge theories without loops or multiple edges, which in particular implies that the corresponding Coulomb branches have finitely many symplectic leaves and have rational Gorenstein singularities. We also give a criterion for proving that any particular Coulomb branch has symplectic singularities, and discuss the possible extension of our results to quivers with loops and/or multiple edges.
\end{abstract}
\maketitle

\section{Introduction}

Braverman, Finkelberg and Nakajima have proposed a mathematical definition for the Coulomb branches of a large class of $3d$ $\cN=4$ gauge theories \cite{Nak6,BFN1}. Their work has attracted considerable interest due to its origin in quantum field theory, and also because Coulomb branches -- along with their duals, Higgs branches -- play a natural role in the mathematical theory of symplectic duality \cite{BLPW, BDGH, Web3, BF2}.  They define Coulomb branches as algebraic varieties with Poisson structures over $\CC$ (in fact, they are defined over $\ZZ$), and conjecture  \cite[\S 3(iv)]{BFN1} that Coulomb branches have {\em symplectic singularities} in the sense of Beauville \cite{Beauville}.  This is a natural conjecture from the point of view of the symplectic duality program, but remains unknown outside of a limited number of cases.

Quiver gauge theories form a particularly interesting family of $3d $ $\cN =4$ gauge theories, whose Higgs branches are Nakajima quiver varieties.  Their Coulomb branches are also important spaces in geometric representation theory: in finite ADE types they are generalized affine Grassmannian slices \cite[Theorem 3.10]{BFN2}, while in affine type A they are Cherkis bow varieties \cite[Theorem 6.18]{NakTak}.  In general, the Coulomb branch for a quiver gauge theory without loops can be thought of as a generalized affine Grassmannian slice for the corresponding symmetric Kac-Moody group \cite{FinkICM}.  These varieties are suggested as a possible setting in which to generalize the geometric Satake correspondence, see \cite{Nak8} and \cite[\S 3(viii)]{BFN2}.

Given a quiver $Q$ along with dimension vectors $\bv, \bw$, we denote the corresponding Coulomb branch by $\cM_C(Q,\bv, \bw)$.   Throughout most of this note, we assume that $Q$ is a {\em simple quiver}: that its underlying graph has no loops or multiple edges (although cycles are still allowed).  
\begin{Theorem}
\label{main thm}
For any simple quiver, $\cM_C(Q,\bv, \bw)$ has symplectic singularities.
\end{Theorem}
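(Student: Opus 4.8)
The plan is to reduce the statement to an assertion about rational singularities and then establish the latter by an inductive local analysis along the symplectic leaves of $\cM_C(Q,\bv,\bw)$. The reduction is due to Namikawa. The Coulomb branch is a normal affine variety carrying a Poisson bracket that is generically nondegenerate, so $\cM_C(Q,\bv,\bw)$ admits a symplectic form $\omega$ on its smooth locus; writing $2n = \dim \cM_C(Q,\bv,\bw)$, the top power $\omega^{n}$ trivialises the canonical sheaf on the smooth locus, and since the singular locus has codimension $\geq 2$ by normality, the canonical class is trivial. For a normal variety equipped with a symplectic form on its smooth locus, Namikawa's extension theorem identifies the property of having symplectic singularities in the sense of Beauville \cite{Beauville} with that of having rational (equivalently, canonical) singularities. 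It therefore suffices to prove that $\cM_C(Q,\bv,\bw)$ has rational singularities.

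Next I would organise this as a local, inductive problem. The Coulomb branch is stratified by its symplectic leaves, and in an \'etale (or formal) neighbourhood of a point $x$ lying on a leaf $L$ it splits as a product of $L$ with a transverse slice $S_x$; since $L$ is smooth and symplectic, $\cM_C(Q,\bv,\bw)$ has symplectic singularities near $x$ if and only if $S_x$ does. The geometric input I would need is the identification of these transverse slices: for a simple quiver, $S_x$ should again be (a product of) Coulomb branches $\cM_C(Q,\bv',\bw')$ with $|\bv'| < |\bv|$. In finite ADE type this is the familiar statement that a transverse slice to a stratum of a generalised affine Grassmannian slice is a smaller such slice, compatibly with \cite[Theorem 3.10]{BFN2}. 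Granting it, one induces on $|\bv|$, with base case a smooth (whence trivially symplectic) variety, to conclude that $\cM_C(Q,\bv,\bw)$ has symplectic singularities away from its deepest, most singular stratum.

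The crux is then to pass from the punctured variety to the whole. The transverse slice to the deepest leaf is $\cM_C(Q,\bv,\bw)$ itself, so no reduction in $|\bv|$ is available there and the induction alone cannot close. Here I would invoke the contracting $\CC^\times$-action, under which $\omega$ has positive weight: having reduced to rational singularities and having already established them on the complement of a closed subset of codimension $\geq 2$, one promotes this to rational singularities at the cone point by a cohomological vanishing argument adapted to the conical, Gorenstein, symplectic structure. Once symplectic singularities are established, the stated consequences --- finitely many symplectic leaves and rational Gorenstein singularities --- follow from the general theory (Beauville, Kaledin).

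I expect the main obstacle to be twofold. The more serious difficulty is proving the slice identification in the full generality of simple quivers, including non-ADE (affine or indefinite Kac--Moody) types where the affine Grassmannian picture is unavailable; this demands a direct analysis of the Braverman--Finkelberg--Nakajima construction \cite{BFN1} and of how the leaf stratification interacts with the orbit structure of $\GK$ on the variety of triples. The second is making the passage to the deepest stratum rigorous without any symplectic resolution at hand, where conicality and the reduction to rational singularities are indispensable. Establishing normality of $\cM_C(Q,\bv,\bw)$ in this generality, insofar as it is not already available, is a prerequisite of comparable weight.
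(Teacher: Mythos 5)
There is a genuine gap --- in fact two --- and they sit exactly at the points you flag as ``the crux.'' First, your inductive scaffolding is circular: the finiteness of the symplectic leaves and the \'etale-local product decomposition $L \times S_x$ transverse to a leaf are \emph{consequences} of having symplectic singularities (via \cite{Kaledin}), not structures available for a general normal Poisson variety with generically nondegenerate bracket. Before the theorem is proved, you have neither a leaf stratification with finitely many strata nor transverse slices; and the identification of such slices with smaller Coulomb branches $\cM_C(Q,\bv',\bw')$ is an open problem outside finite ADE and affine type A --- the paper never proves it and does not need it. Second, the passage at the deepest stratum is not a waving-away-able technicality: there is no general vanishing theorem promoting rational singularities from the complement of the cone point to the cone point for a normal, conical, Gorenstein variety symplectic on its smooth locus. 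This is precisely why the Braverman--Finkelberg--Nakajima conjecture is nontrivial --- they already prove normality and generic symplecticity, so if your step 3 were a theorem, the conjecture would follow for every Coulomb branch at once. Your Namikawa reduction (symplectic singularities $\Leftrightarrow$ rational singularities, given normality and a symplectic form on the regular locus) is correct, but the proposal supplies no working mechanism to establish rational singularities.

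The paper's route avoids both obstacles. Instead of slicing along leaves of $\cM_C$, it works over the integrable system $\varpi^\defparam : \cM_C^\defparam(\bG,\bN) \to \ft \sslash W$: by equivariant localization (Theorem \ref{prop: fibers via centralizers and fixed}), over a point $t \in \ft$ the partial flavour resolution is \'etale-locally modelled on $\cM_C^\defparam(Z_\bG(t),\bN^t)_t$, and for a quiver gauge theory the pair $(Z_\bG(t),\bN^t)$ is again a product of quiver data --- this is the rigorous substitute for your conjectural slice identification, proved directly from \cite{BFN1} rather than from leaf geometry. The deepest-point problem is then dissolved by the generic GIT parameter: one only needs the local models to be smooth and symplectic for $t \notin \ft^{(4)}$ (a finite list of small quivers, Lemma \ref{lemma: classification} and Figure \ref{figure}, handled via \cite{BellamyKuwabara}, \cite{BFN4}, \cite{NakTak}), whence by faithful flatness the singular locus of $\cM_C^\defparam$ has codimension $\geq 4$, Flenner's extension theorem \cite{Flenner} gives symplectic singularities for $\cM_C^\defparam$ directly (no rationality argument at a cone point is ever made), and the property descends along the proper birational Poisson map $\pi^\defparam$ by \cite[Lemma 6.12]{BS}. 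In short: where you propose an induction on $|\bv|$ along a stratification you cannot yet construct, plus a nonexistent conical vanishing theorem, the paper trades both for a codimension estimate on a flavour resolution with generic $\defparam$ and a classification of the codimension-$\leq 3$ local models.
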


This result is already known in certain cases.  In affine type A it follows from the work of Nakajima and Takayama, since bow varieties admit symplectic resolutions \cite[\S 6.2]{NakTak}.  In finite ADE types, recall that $\cM_C(Q, \bv, \bw) \cong \overline{\cW}_{\mu^\ast}^{\lambda^\ast}$ is a generalized affine Grassmannian slice.  These varieties have symplectic singularities by the recent work of Zhou \cite[Theorem 4.7]{Zhou}. Zhou's result subsumes several previous results for finite ADE types: If $\mu^\ast$ is dominant then it is an {ordinary} affine Grassmannian slice, so the above theorem is part of \cite[Theorem 2.7]{KWWY}.  When $\lambda^\ast = 0$ or equivalently $\bw = 0$, then $\overline{\cW}_{\mu^\ast}^0$ is an open Zastava space \cite[Theorem 3.1]{BFN2}, which is smooth \cite[Prop.~3.5]{Finkelberg-Mirkovic} and symplectic \cite[Corollary 1]{FKMM}. Finally, if $\lambda^\ast$ is minuscule and $\mu^\ast \in W \lambda^\ast$, then $\overline{\cW}_{\mu^\ast}^{\lambda^\ast} \cong \mathbb{A}^{2 \langle \rho, \lambda^\ast - \mu^\ast\rangle}$ is smooth and symplectic by \cite[Theorem 2.9]{KryPer},  and in fact its symplectic structure can be identified with the standard one on $\mathbb{A}^{2 \langle \rho, \lambda^\ast - \mu^\ast\rangle} \cong T^\ast \mathbb{A}^{\langle \rho, \lambda^\ast - \mu^\ast\rangle}$. 


We develop a criterion (Theorem \ref{criterion}) for showing that a given Coulomb branch has symplectic singularities.  Applying this criterion in the case of quiver gauge theories,  the proof of Theorem \ref{main thm} reduces to the study of several simple quivers, see Figure \ref{figure}.   
In \S \ref{section: extending} we list those additional cases which would be necessary and sufficient in order to extend the main theorem to all quivers, i.e.~to allow loops and/or multiple edges.  

In addition to providing a definition of Coulomb branches in their foundational paper \cite{BFN1}, Braverman, Finkelberg and Nakajima also developed several techniques which are crucial in the present work.  The most important is a construction of partial resolutions of Coulomb branches using \emph{flavour symmetry}, see \S\ref{section: flavour resolutions}.  This construction can be interpreted as a GIT quotient,  allowing for variation of GIT arguments.   In the case of simple quivers, we show that for generic GIT parameters these partial resolutions have terminal singularities, and we optimistically expect that they are $\QQ$--factorial terminalizations, see Remark \ref{remark: qfactterm}.

In this paper we will not make use of the physical meaning of Coulomb branches, and refer the reader to \cite{BDG, Nak6} for more details and further references.  We note that the symplectic singularities and resolutions of Coulomb branches have been studied in physics, for example by Hanany and collaborators \cite{HananySperling,Hetal,GrimmingerHanany}.

By \cite[Prop.~1.3]{Beauville}, \cite[Theorem 2.3]{Kaledin} and \cite[Prop.~3.7]{Brown-Gordon}, Theorem \ref{main thm} has several immediate consequences:
\begin{Corollary} 
For any simple quiver:
\begin{enumerate}[(a)]
\item $\cM_C(Q,\bv,\bw)$ has rational Gorenstein singularities.

\item $\cM_C(Q,\bv, \bw)$ has finitely many (holomorphic) symplectic leaves.  Moreover, the symplectic leaves are the irreducible components of the smooth loci $X_i^{reg} \subset X_i$ for the stratification
$$
\cM_C(Q, \bv, \bw) = X_0 \supset X_1 \supset X_2 \supset \ldots,
$$
where $X_{i+1}$ is the singular locus of $X_{i}$.

\item The symplectic leaves of $\cM_C(Q,\bv, \bw)$ are smooth connected locally-closed subvarieties.
\end{enumerate}
\end{Corollary}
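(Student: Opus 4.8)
The plan is to obtain all three statements as formal consequences of Theorem \ref{main thm}: once we know that $X = \cM_C(Q,\bv,\bw)$ is a symplectic singularity, and recall that it is an affine variety of finite type over $\CC$, parts (a)--(c) should follow from the three cited results with no further input specific to Coulomb branches. The work is thus entirely in unpacking the definition of symplectic singularity and feeding it to Beauville, Kaledin and Brown--Gordon.

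For part (a) I would argue directly from Beauville's setup. By Theorem \ref{main thm}, $X$ is normal and carries a holomorphic symplectic form $\omega$ on its smooth locus $X^{reg}$ which extends to a regular $2$-form on some resolution $\pi \colon \widetilde{X} \to X$ (which exists by Hironaka). Writing $\dim X = 2n$, the top power $\omega^{\wedge n}$ is a nowhere-vanishing section of the canonical sheaf $\omega_{X^{reg}}$, so $K_X$ is trivial on $X^{reg}$. Since $X$ is normal its singular locus has codimension $\geq 2$, so this trivializing section extends across the singularities and $K_X$ is Cartier; hence $X$ is Gorenstein. Rationality of the singularities is exactly \cite[Prop.~1.3]{Beauville}, whose proof uses the extension of $\omega$ to $\widetilde{X}$ to control $\pi_\ast \omega_{\widetilde{X}}$. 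Together these give (a).

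For parts (b) and (c) I would invoke Kaledin's stratification theorem. By \cite[Theorem 2.3]{Kaledin}, any symplectic singularity admits a canonical stratification whose strata are smooth and carry symplectic structures, and these strata are precisely the smooth loci of the iterated singular loci, i.e.\ the irreducible components of the $X_i^{reg}$ in the chain $X_0 \supset X_1 \supset \cdots$ with $X_{i+1} = \operatorname{Sing}(X_i)$. This is the displayed stratification of (b). Finiteness of the number of strata follows because $X$ is of finite type over $\CC$: the descending chain of singular loci terminates after finitely many steps, and each $X_i$ has finitely many irreducible components. Each such component is irreducible, hence connected, and is open in the closed subvariety $X_i$, hence locally closed in $X$; this yields (c).

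The one genuinely non-formal point, which I expect to be the main obstacle, is matching Kaledin's geometric strata with the \emph{holomorphic symplectic leaves} coming from the Poisson structure on $X$ --- that is, the leaves of the symplectic foliation integrating the Poisson bivector, which is the notion named in the corollary. In the affine Poisson setting relevant here this identification is supplied by \cite[Prop.~3.7]{Brown-Gordon}, where the compatibility between the Poisson structure and the symplectic form on $X^{reg}$ is used to show that the Poisson-theoretic leaves coincide with the strata of the iterated-singular-locus stratification. Everything else reduces to the cited results once Theorem \ref{main thm} is in hand.
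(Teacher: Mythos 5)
Your proposal is correct and follows exactly the route the paper takes: the corollary is stated there as an immediate consequence of Theorem \ref{main thm} together with \cite[Prop.~1.3]{Beauville} for (a), \cite[Theorem 2.3]{Kaledin} for (b), and \cite[Prop.~3.7]{Brown-Gordon} for (c), with no further Coulomb-branch-specific input. Your unpacking of the Gorenstein property, the finiteness of the iterated-singular-locus chain, and the identification of the Poisson-theoretic leaves with Kaledin's strata is a faithful (and slightly more detailed) account of what those citations supply.
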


In finite ADE type $\cM_C(Q, \bv,\bw)$ has an explicit decomposition into symplectic leaves by \cite{MuthiahWeekes}, \cite[Theorem~2.5]{KWWY}, and in affine type A by \cite[\S 4.1]{NakTak}.  These decompositions are very natural from the point of view of affine Grassmannian slices (resp.~bow varieties), and are of representation-theoretic significance.  Some predictions in general are given in \cite[\S 2(ii)]{Nak7},\cite{Hetal,GrimmingerHanany}.

\subsection{Acknowledgements}

This work owes its existence to Braverman, Finkelberg, and Nakajima, and their groundbreaking work on Coulomb branches.
It also would not have been possible without discussions with Gwyn Bellamy, Michael Finkelberg, Dinakar Muthiah, Hiraku Nakajima, Travis Schedler and Oded Yacobi. I am grateful to them for their many helpful suggestions and comments.  Finally, I would like to thank my anonymous referees for their careful reading and feedback.

\section{Recollections on Coulomb branches}

\subsection{The definition}
We briefly overview the construction of Coulomb branches due to Braverman, Finkelberg, and Nakajima, referring the reader to \cite{BFN1} for more details and properties.  

Let $\bG$ be a connected reductive group over $\CC$, and let $\bN$ be a representation of $\bG$ over $\CC$.  It will be convenient to fix a maximal torus $\bT \subset \bG$, with Lie algebra $\ft = \operatorname{Lie} \bT$ and Weyl group $W$.

To the datum $(\bG, \bN)$ there is an associated moduli space $\cR_{\bG, \bN}$ \cite[\S 2(i)]{BFN1}, consisting of triples $(P, \varphi, s)$ where (i) $P$ is principal $\bG$--bundle on the formal disk $D$, (ii) $\varphi : P |_{D^\times} \rightarrow D^\times \times \bG$ a trivialization over the formal punctured disk, and (iii) $s$ is a section of the associated bundle $P \times^\bG \bN$ such that $\varphi(s)$ is regular.  Its equivariant Borel-Moore homology $H_\ast^{\bG_\cO}(\cR_{\bG, \bN})$ has a commutative ring structure by \cite[Theorem 3.10]{BFN1}, and the corresponding affine scheme
\begin{equation*}
\label{eq: CB def}
\cM_C(\bG, \bN) \DEF  \Spec H_\ast^{\bG_\cO}(\cR_{\bG, \bN})
\end{equation*}
is called the {\bf Coulomb branch associated to $(\bG, \bN)$} \cite[Definition 3.13]{BFN1}.  

\begin{Remark}
We will always take Borel-Moore homology with coefficients in $\CC$, and think of $\cM_C(\bG,\bN)$ as a scheme over $\CC$.
\end{Remark}

Some key examples of Coulomb branches are recalled in \S \ref{examples}.  For now, we summarize several important properties:
\begin{Theorem}[\mbox{\cite[Cor.~5.21, 5.22, Prop.~6.12, 6.15]{BFN1}}]
\label{thm: BFN theorem}
$\cM_C(\bG, \bN)$ is an irreducible normal affine variety of dimension $2 \operatorname{rk} \bG$.  It has a Poisson structure which is symplectic on its smooth locus.
\end{Theorem}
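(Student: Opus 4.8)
The statement bundles together several structural facts about the commutative ring $\cA := H_\ast^{\GO}(\cR_{\bG,\bN})$ (commutative by \cite[Theorem~3.10]{BFN1}); since $\cM_C(\bG,\bN) = \Spec \cA$ is affine by construction, the plan is to read off every geometric assertion from an algebraic study of $\cA$. The backbone of that study is the projection $\cR_{\bG,\bN} \to \Gr_\bG$ to the affine Grassmannian, whose $\GO$--orbits are indexed by dominant coweights. First I would pull this stratification back to equip $\cA$ with an increasing filtration whose associated graded is a free module over $H_\ast^{\GO}(\mathrm{pt}) \cong \CC[\ft]^W$, with one fundamental class per dominant coweight. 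Since $W$ acts on $\ft$ as a reflection group, Chevalley's theorem makes $\CC[\ft]^W$ a polynomial ring; thus $\cA$ is a free module over a regular ring of Krull dimension $\operatorname{rk}\bG$, and geometrically the integrable system $\cM_C \to \ft/W \cong \mathbb{A}^{\operatorname{rk}\bG}$ is a flat family. Finite generation of $\cA$ as a $\CC$--algebra (so that $\cM_C$ is genuinely of finite type) I would extract from the explicit multiplication of these classes.

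To compute the dimension I would localize the base to the regular semisimple locus $\ft^{\mathrm{reg}}/W$, where the space of triples degenerates to an explicit model involving the dual torus of relative dimension $\operatorname{rk}\bG$; together with $\dim \ft/W = \operatorname{rk}\bG$ this gives $\dim \cM_C = 2\operatorname{rk}\bG$. Irreducibility I would obtain from \emph{abelianization}: inverting the root Euler classes $e_\alpha$ produces an injection $\cA[e_\alpha^{-1}] \hookrightarrow H_\ast^{\bT_\cO}(\cR_{\bT,\bN})[e_\alpha^{-1}]$ whose target, by the explicit torus computation, is a localization of a Laurent polynomial ring and hence a domain. Because $\cA$ is torsion-free over $\CC[\ft]^W$ by the freeness above, it embeds into this localization, so $\cA$ is an integral domain and $\cM_C$ is irreducible.

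Normality is the main obstacle. The cleanest route I see is again through abelianization: one identifies the image of $\cA$ inside the domain $H_\ast^{\bT_\cO}(\cR_{\bT,\bN})[e_\alpha^{-1}]$ as the subring cut out by $W$--invariance together with precise regularity (no-pole) conditions along the root hyperplanes. Expressed this way, $\cA$ becomes a finite intersection of manifestly integrally closed localizations, whence $\cA$ is integrally closed and $\cM_C$ is normal. The genuine difficulty is pinning down exactly the right support/pole conditions so that this intersection recovers $\cA$ on the nose; alternatively one could try Serre's criterion, using flatness over the regular base $\ft/W$ to reduce $R_1 + S_2$ to an analysis of the fibres, but verifying $S_2$ for the degenerate fibres seems no easier.

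Finally, the Poisson and symplectic structures. The Poisson bracket comes from quantization: enlarging $\GO$ by the loop-rotation $\CC^\times$ yields a filtered noncommutative deformation $\cA_\hbar$ of $\cA$, and the order-$\hbar$ part of the commutator descends to a Poisson bracket on $\cA = \cA_\hbar/\hbar\cA_\hbar$. For symplecticity I would first verify nondegeneracy over $\ft^{\mathrm{reg}}/W$, where the explicit model is smooth of dimension $2\operatorname{rk}\bG$ and carries the standard symplectic form, so that $\cM_C$ is generically symplectic. Upgrading this to nondegeneracy on the entire smooth locus amounts to showing that the degeneracy locus $\{\pi^{\wedge \operatorname{rk}\bG} = 0\}$ --- a priori an anticanonical divisor on $\cM_C^{\mathrm{sm}}$ --- is empty; here I would exploit the contracting $\CC^\times$--action, under which the bracket is homogeneous of a fixed nonzero weight, to rule out such a divisor. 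This last step, together with normality, is where the real work lies.
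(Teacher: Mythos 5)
This theorem is quoted by the paper directly from \cite[Cor.~5.21, 5.22, Prop.~6.12, 6.15]{BFN1} with no proof of its own, so the benchmark is BFN's argument, whose mechanism the present paper abstracts as Theorem \ref{criterion: normal symplectic}. Your outline agrees with that argument on the soft parts: commutativity, freeness of $\cA = H_\ast^{\GO}(\cR_{\bG,\bN})$ over $\CC[\ft]^W$ via the filtration by dominant coweights, the dimension count over the generic locus, and irreducibility by embedding $\cA$ into a localization of the abelianized ring (compare Lemma \ref{cor: integral}). But both hard points --- normality and nondegeneracy on the whole smooth locus --- have genuine gaps in your proposal, and in both cases the missing idea is the same one: étale-local reduction to the centralizer pairs $(Z_\bG(t), \bN^t)$ at points $t \in \ft \setminus \ft^{(2)}$, i.e.\ points lying on at most one generalized root hyperplane, where the theory collapses to abelian or rank-one pieces that can be analyzed explicitly.

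For normality, your plan to characterize $\cA$ inside the abelianized domain by $W$--invariance plus pole conditions along root hyperplanes is not established in general (you flag this yourself); BFN instead prove normality over the preimage of $(\ft\setminus\ft^{(2)})\sslash W$ via the centralizer reduction, and then extend regular functions across the codimension-two complement using flatness of $\varpi$ and a Hartogs-type lemma \cite[Lemma 6.13]{BFN1} --- exactly the two-step structure of Theorem \ref{criterion: normal symplectic}. For symplecticity, your final step fails as stated: homogeneity of the bracket under a contracting $\CC^\times$ does not exclude a degeneracy divisor. On $\CC^2$ with both coordinates of weight $1$, the bracket $\{x,y\} = x$ is homogeneous of fixed nonzero weight yet degenerates along the invariant divisor $x=0$. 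Moreover, nondegeneracy over $\ft^{\mathrm{reg}}\sslash W$ alone is insufficient, because the discriminant has codimension one in the base, so its preimage can contain a divisor of $\cM_C(\bG,\bN)^{reg}$ on which the form degenerates. The correct argument establishes nondegeneracy over the preimage of $(\ft\setminus\ft^{(2)})\sslash W$ (again by the rank-one analysis), whose complement has codimension $\geq 2$ by flatness of $\varpi$; since the degeneracy locus of the Poisson bivector on the smooth locus is a divisor if nonempty, it must be empty. This is the content of \cite[Prop.~6.15]{BFN1} and of the corresponding step in the proof of Theorem \ref{criterion: normal symplectic}.
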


The Poisson structure on $\cM_C(\bG,\bN)$ is defined by considering $H_\ast^{\bG_\cO \rtimes \CC^\times}(\cR_{\bG,\bN})$, where the group $\CC^\times$ acts on $\cR_{\bG,\bN}$ via loop rotation of the disc $D$.  This Borel-Moore homology again carries an algebra structure, and is a non-commutative deformation of $H_\ast^{\bG_\cO}(\cR_{\bG,\bN})$, see \cite[\S 3(iv)]{BFN1}. This naturally endows $\cM_C(\bG,\bN)$ with a Poisson structure.

One further property of $\cM_C(\bG,\bN)$ that we will need is described in \cite[\S 3(vii)(a)]{BFN1}: suppose that $\bG = \bG_1 \times \bG_2$ and $\bN = \bN_1\oplus \bN_2$, where $\bN_i$ is a representation of $\bG_i$ alone.  Then
\begin{equation}
\label{eq: product}
\cM_C(\bG, \bN) \cong \cM_C(\bG_1,\bN_1) \times \cM_C(\bG_2, \bN_2)
\end{equation}

\subsection{Flavour resolutions}
\label{section: flavour resolutions}
An important structure for Coulomb branches is the notion of flavour symmetry,	 this lively terminology coming from physics.  For the purposes of this paper we will only consider the following situation: that $\bG$ fits into an exact sequence of connected reductive groups
\begin{equation*}
1 \longrightarrow \bG \longrightarrow \widetilde{\bG} \longrightarrow \bF \longrightarrow 1
\end{equation*}
such that $(i)$ the action of $\bG$ on $\bN$ extends to an action of the larger group $\widetilde{\bG}$, and $(ii)$ $\bF$ is a torus.  In particular, $\bG$ is a normal subgroup of $\widetilde{\bG}$.  We call $\bF$ a {\bf flavour symmetry group}.  We will denote a maximal torus by $\bT \subset \widetilde{\bT} \subset \widetilde{\bG}$, and its Lie algebra by $\tilde{\ft} = \operatorname{Lie}\widetilde{\bT}$.

The flavour symmetry group $\bF$ can be used to construct partial resolutions of the Coulomb branch, as well as to construct associated line bundles, following \cite[\S 3(ix)]{BFN1} and \cite{BFN4}. For these partial resolutions we should consider the moduli space $\cR_{\widetilde{\bG}, \bN}$ for the pair $(\widetilde{\bG}, \bN)$.  It admits a map $\pi : \cR_{\widetilde{\bG}, \bN} \rightarrow \Gr_\bF$ to the affine Grassmannian $\Gr_\bF$ for the group $\bF$. For any coweight $\defparam \in \cochar$ of $\bF$ we define $\cR_{\widetilde{\bG}, \bN} (\defparam) = \pi^{-1}(z^\defparam)$ to be the preimage  of the point $z^\defparam \in \Gr_\bF$.  In particular $\cR_{\widetilde{\bG}, \bN} (0) = \cR_{\bG, \bN}$.  There is a corresponding grading on the equivariant homology ring:
\begin{equation}
\label{eq: grading by coweights}
H_\ast^{\bG_\cO}( \cR_{\widetilde{\bG}, \bN}) = \bigoplus_{\defparam \in \cochar}  H_\ast^{\bG_\cO}\big( \cR_{\widetilde{\bG}, \bN} (\defparam) \big)
\end{equation}
Now, for a fixed coweight $\defparam$ define:
\begin{equation*}
\cM_C^\varkappa(\bG, \bN) \DEF \Proj \Big( \bigoplus_{n \geq 0} H_\ast^{\bG_\cO}\big( \cR_{\widetilde{\bG}, \bN} ( n \defparam) \big) \Big)
\end{equation*}
In particular, $ \cM_C^0(\bG, \bN) = \cM_C(\bG, \bN)$.  Note that we omit $\widetilde{\bG}$ from our notation, though it is important in the definition of $\cM_C^\defparam(\bG,\bN)$. There is a natural projective morphism 
\begin{equation}
\label{eq: partial reso}
\pi^\defparam : \cM_C^\defparam(\bG, \bN)  \rightarrow \cM_C(\bG, \bN),
\end{equation}  
which is  birational by \cite[Remark 1.1]{BFN4}.  In good cases this map will be a symplectic resolution, see e.g.~\S \ref{example 1}, \ref{example 2}.

\begin{Remark}
\label{rem: hamiltonian reduction}
As explained in \cite[\S 3(ix)]{BFN1}, this construction can be naturally identified with a Hamiltonian reduction:
$$
\cM^\defparam_C(\bG, \bN) \cong \cM_C(\widetilde{\bG}, \bN)\ssslash_\defparam \bF^\vee
$$ 
Here $\bF^\vee = \Spec \CC[\cochar]$ is the dual torus to $\bF$, and $\defparam$ is thought of as a GIT parameter for $\bF^\vee$.  In particular, this perspective shows that $\cM_C^\defparam(\bG,\bN)$ is finite type, and that the map $\pi^\defparam$ is Poisson.
\end{Remark}

\begin{Remark}
The $\cM_C^\defparam(\bG,\bN)$ are irreducible varieties by Lemma \ref{cor: integral} below. We expect that they are also normal, and symplectic on their smooth loci. In Theorem \ref{thm: BFN theorem for resolutions},  we will prove that these properties hold in the case of quiver gauge theories for simple quivers.
\end{Remark}

%

We now consider a flavoured generalization of (\ref{eq: product}).  Following the same notation, suppose that the theories $(\bG_i, \bN_i)$ both admit the same flavour symmetry group $\bF$.  Then so does $(\bG, \bN)$: 
\begin{equation}
\begin{tikzcd}
1 \ar[d,equal] \ar[r] & \bG_1 \times \bG_2 \ar[d,equal] \ar[r] & \widetilde{\bG_1} \times \widetilde{\bG_2} \ar[r] & \bF \times \bF \ar[r] & 1 \ar[d, equal] \\
1 \ar[r] & \bG \ar[r] & \widetilde{\bG} \ar[u, hookrightarrow] \ar[r] & \bF \ar[u, hookrightarrow, "\Delta"] \ar[r] & 1 
\end{tikzcd}
\end{equation}
That is, we define $\widetilde{\bG} \subset \widetilde{\bG_1}\times \widetilde{\bG_2}$ as the preimage of the diagonal $\bF \subset \bF \times \bF$.  Since $\widetilde{\bG}_1\times \widetilde{\bG}_2$ naturally acts on $\bN = \bN_1 \oplus \bN_2$, so does its subgroup $\widetilde{\bG}$.
\begin{Lemma}
\label{lemma: segre}
With the above choice of $\widetilde{\bG}$, for any $\defparam \in \cochar$ we have 
$$\cM_C^\defparam(\bG, \bN) \cong \cM_C^\defparam(\bG_1, \bN_1) \times \cM_C^\defparam(\bG_2, \bN_2)$$
\end{Lemma}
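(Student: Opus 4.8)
The plan is to identify the graded ring defining $\cM_C^\defparam(\bG, \bN)$ with the \emph{Segre product} of the two rings defining the factors, and then invoke the standard fact that the $\Proj$ of a Segre product of graded $\CC$-algebras is the product of the two $\Proj$'s (which is the source of the name of the lemma). Concretely, writing $A = \bigoplus_{n \geq 0} A_n$ and $B = \bigoplus_{n\geq 0} B_n$ with $A_n = H_\ast^{(\bG_1)_\cO}(\cR_{\widetilde{\bG_1}, \bN_1}(n\defparam))$ and $B_n = H_\ast^{(\bG_2)_\cO}(\cR_{\widetilde{\bG_2}, \bN_2}(n\defparam))$, the goal is to show that the degree-$n$ piece $H_\ast^{\bG_\cO}(\cR_{\widetilde{\bG}, \bN}(n\defparam))$ is $A_n \otimes_\CC B_n$ compatibly with products, so that the defining ring is $A \circ B := \bigoplus_n A_n \otimes_\CC B_n$.

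First I would analyze the fibres $\cR_{\widetilde{\bG}, \bN}(n\defparam) = \pi^{-1}(z^{n\defparam})$. Since $\widetilde{\bG}$ is by definition the preimage of the diagonal $\Delta \bF \subset \bF \times \bF$, the map $\Gr_{\widetilde{\bG}} \to \Gr_{\widetilde{\bG_1}} \times \Gr_{\widetilde{\bG_2}} \to \Gr_\bF \times \Gr_\bF$ factors through the diagonal $\Gr_\bF$. Over the point $z^{n\defparam}$, the fibre of $\Gr_{\widetilde{\bG}} \to \Gr_\bF$ is a translate of $\Gr_{\bG_1 \times \bG_2} = \Gr_{\bG_1} \times \Gr_{\bG_2}$, which matches the product of the fibres of the two maps $\Gr_{\widetilde{\bG_i}} \to \Gr_\bF$ over $z^{n\defparam}$. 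Because $\bN = \bN_1 \oplus \bN_2$ and $\widetilde{\bG}$ acts through $\widetilde{\bG_1} \times \widetilde{\bG_2}$, the section data and the regularity condition split as a product as well. I therefore expect an isomorphism
\begin{equation*}
\cR_{\widetilde{\bG}, \bN}(n\defparam) \;\cong\; \cR_{\widetilde{\bG_1}, \bN_1}(n\defparam) \times \cR_{\widetilde{\bG_2}, \bN_2}(n\defparam),
\end{equation*}
which is equivariant for $\bG_\cO = (\bG_1)_\cO \times (\bG_2)_\cO$ acting factorwise; note $\bG_\cO$ preserves the fibres of $\pi$, so this is exactly the equivariance relevant to the grading \eqref{eq: grading by coweights}. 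This is the graded refinement of the decomposition $\cR_{\bG_1 \times \bG_2, \bN} \cong \cR_{\bG_1, \bN_1} \times \cR_{\bG_2, \bN_2}$ that underlies \eqref{eq: product}. Applying the K\"unneth theorem in equivariant Borel–Moore homology then yields, for each $n \geq 0$,
\begin{equation*}
H_\ast^{\bG_\cO}\big(\cR_{\widetilde{\bG}, \bN}(n\defparam)\big) \;\cong\; H_\ast^{(\bG_1)_\cO}\big(\cR_{\widetilde{\bG_1}, \bN_1}(n\defparam)\big) \otimes_\CC H_\ast^{(\bG_2)_\cO}\big(\cR_{\widetilde{\bG_2}, \bN_2}(n\defparam)\big) = A_n \otimes_\CC B_n.
\end{equation*}

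The main obstacle is to check that these degreewise isomorphisms assemble into an isomorphism of graded \emph{rings} onto $A \circ B$, i.e.\ that the BFN convolution product $H_\ast^{\bG_\cO}(\cR(n\defparam)) \otimes H_\ast^{\bG_\cO}(\cR(m\defparam)) \to H_\ast^{\bG_\cO}(\cR((n+m)\defparam))$ corresponds to the componentwise multiplication $(a \otimes b)(a' \otimes b') = aa' \otimes bb'$. I would handle this exactly as in the proof of \eqref{eq: product} in \cite[\S 3(vii)(a)]{BFN1}: the convolution diagram defining the product on $H_\ast^{\bG_\cO}(\cR_{\widetilde{\bG},\bN})$ is compatible with the projection $\pi$ to $\Gr_\bF$ (this compatibility is precisely what makes \eqref{eq: grading by coweights} a grading of rings), and under the product decomposition of the fibres established above it becomes literally the product of the two convolution diagrams for the factors. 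Granting this, $\bigoplus_{n\geq 0} H_\ast^{\bG_\cO}(\cR_{\widetilde{\bG},\bN}(n\defparam)) \cong A \circ B$ as graded rings. I would then conclude using the standard identification $\Proj(A \circ B) \cong \Proj A \times \Proj B$ for finitely generated graded $\CC$-algebras; the required finiteness holds since each $\cM_C^\defparam$ is of finite type by Remark \ref{rem: hamiltonian reduction}. This gives the asserted isomorphism $\cM_C^\defparam(\bG, \bN) \cong \cM_C^\defparam(\bG_1, \bN_1) \times \cM_C^\defparam(\bG_2, \bN_2)$.
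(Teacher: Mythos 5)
Your proposal is correct and follows essentially the same route as the paper: the paper's proof also identifies the graded ring with the Segre product via the degreewise isomorphism $\cR_{\widetilde{\bG}, \bN}(n\defparam) \cong \cR_{\widetilde{\bG_1}, \bN_1}(n\defparam) \times \cR_{\widetilde{\bG_2}, \bN_2}(n\defparam)$ and K\"unneth, then concludes via \cite[Exercise II.5.11]{Hartshorne}. Your additional verifications (the compatibility of the fibre decomposition with the convolution product, and the finiteness needed for $\Proj$ of the Segre product) are exactly the points the paper leaves implicit in the phrases ``follows from the definitions'' and its citation of Hartshorne.
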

\begin{proof}
This is a Segre product: by \cite[Exercise II.5.11]{Hartshorne}, it suffices to show that for $n\geq 0$
$$
H_\ast^{\bG_\cO}\big( \cR_{\widetilde{\bG}, \bN} ( n \defparam) \big) \cong H_\ast^{\bG_{1,\cO}}\big( \cR_{\widetilde{\bG_1}, \bN_1} ( n \defparam) \big) \otimes_\CC H_\ast^{\bG_{2,\cO}}\big( \cR_{\widetilde{\bG_2}, \bN_2} ( n \defparam) \big)
$$
By K\"unneth's theorem, this follows from $\cR_{\widetilde{\bG}, \bN}(n\defparam) \cong \cR_{\widetilde{\bG_1}, \bN_1}(n \defparam) \times  \cR_{\widetilde{\bG_2}, \bN_2}(n \defparam)$, which in turn follows from the definitions.
\end{proof}


\subsection{The integrable system and hyperplane arrangement} 
We will freely identify equivariant cohomology rings with the coordinate rings of quotients of Cartan subalgebras, for example:
$$
H_\bT^\ast(pt) \cong \CC[\ft] \ \text{ and } \ H_\bG^\ast(pt) \cong \CC[\ft]^W
$$ 
There is an inclusion of rings $\CC[\ft]^W  \hookrightarrow H_\ast^\bG( \cR_{\bG, \bN})$ by \cite[\S 3(vi)]{BFN1}, which induces a morphism
\begin{equation*}
\varpi: \cM_C(\bG, \bN) \longrightarrow  \ft \sslash W,
\end{equation*}
Composing with $\pi^\defparam$ we obtain a map $\varpi^\defparam: \cM^\defparam_C(\bG, \bN) \rightarrow \ft \sslash W$.   The maps $\varpi, \varpi^\defparam$ are integrable systems for the Poisson structure, in that $\CC[\ft]^W$ is a maximal Poisson-commutative subalgebra by \cite[Cor.~5.21]{BFN1}.

\begin{Lemma}
\label{lemma: faithfully flat}
The morphisms $\varpi, \varpi^\defparam$ are faithfully flat.
\end{Lemma}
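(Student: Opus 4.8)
The plan is to route both maps through the ring inclusion $\CC[\ft]^W \hookrightarrow H_\ast^{\GO}(\cR_{\bG,\bN})$ and to reduce everything to a single structural input: that equivariant Borel--Moore homology is \emph{free} over $\CC[\ft]^W \cong H_\bG^\ast(pt)$. Recall first that $\CC[\ft]^W$ is a polynomial ring on $\operatorname{rk}\bG$ generators by Chevalley, so $\ft\sslash W \cong \mathbb{A}^{\operatorname{rk}\bG}$ is regular. The key fact, which I take from the methods of \cite{BFN1} (the stratification of $\cR$ by $\GO$--orbit type over the affine Grassmannian, with the Thom isomorphism identifying each subquotient of the resulting filtration on Borel--Moore homology with a free $\CC[\ft]^W$--module), is that $H_\ast^{\GO}(\cR_{\bG,\bN})$ and $H_\ast^{\GO}(\cR_{\widetilde{\bG},\bN})$ are free $\CC[\ft]^W$--modules. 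I emphasize that this approach deliberately avoids any appeal to Cohen--Macaulayness of the Coulomb branch: that would be circular here, being exactly the sort of regularity that Theorem \ref{main thm} is designed to produce. Working at the level of modules sidesteps the issue entirely.

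For $\varpi$ the conclusion is then immediate: by definition $\varpi$ is the morphism $\Spec H_\ast^{\GO}(\cR_{\bG,\bN}) \to \Spec \CC[\ft]^W$ attached to the inclusion above, and a nonzero free module is faithfully flat, so $\varpi$ is faithfully flat.

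For $\varpi^\defparam$ I first establish flatness on affine charts. Write $S = \bigoplus_{n\geq 0} S_n$ with $S_n = H_\ast^{\GO}(\cR_{\widetilde{\bG},\bN}(n\defparam))$, so that $\cM_C^\defparam(\bG,\bN) = \Proj S$. By (\ref{eq: grading by coweights}), $S$ is an $\CC[\ft]^W$--submodule and direct summand of $M \DEF H_\ast^{\GO}(\cR_{\widetilde{\bG},\bN}) = \bigoplus_{\defparam'\in\cochar} H_\ast^{\GO}(\cR_{\widetilde{\bG},\bN}(\defparam'))$, since $\CC[\ft]^W$ lies in the summand $\defparam'=0$ and hence preserves each graded piece. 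As $M$ is free over $\CC[\ft]^W$, its direct summand $S$ is flat over $\CC[\ft]^W$. On a standard affine chart $\Spec S_{(f)}$ of $\Proj S$, with $f\in S_d$ homogeneous, the ring $S_{(f)} = (S_f)_0$ is an $\CC[\ft]^W$--module direct summand of the localization $S_f$; since localizations of flat modules are flat and direct summands of flat modules are flat, $S_{(f)}$ is flat over $\CC[\ft]^W$. Hence $\varpi^\defparam$ is flat. It remains to check surjectivity. The map $\pi^\defparam$ of (\ref{eq: partial reso}) is projective, hence closed, and birational, hence dominant, so its image is a closed dense subset of the irreducible variety $\cM_C(\bG,\bN)$ and therefore equals it; that is, $\pi^\defparam$ is surjective. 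Composing with the surjective $\varpi$ shows $\varpi^\defparam = \varpi\circ\pi^\defparam$ is surjective, and a flat surjective morphism is faithfully flat.

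The only genuine obstacle is the freeness input. For $\cR_{\bG,\bN}$ this is provided by \cite{BFN1}; for $\cR_{\widetilde{\bG},\bN}$ (taken with its $\GO$--, rather than $\widetilde{\bG}_\cO$--, equivariant homology) the same stratification argument applies, and one may alternatively deduce flatness of each graded piece $S_n$ directly from the stratification of $\cR_{\widetilde{\bG},\bN}(n\defparam)$ by $\bG_\cO$--orbit type. Once freeness is in hand, every remaining step is routine commutative algebra together with the elementary geometry of the birational projective map $\pi^\defparam$.
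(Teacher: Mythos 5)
Your proposal is correct and takes essentially the same route as the paper's proof: both rest on freeness of the $\bG_\cO$-equivariant Borel--Moore homology over $\CC[\ft]^W$ coming from the orbit stratification (\cite[Lemma 2.6]{BFN1}), deduce flatness of $\varpi^\defparam$ from flatness of the graded pieces of $\bigoplus_n H_\ast^{\bG_\cO}(\cR_{\widetilde{\bG},\bN}(n\defparam))$, and get surjectivity from that of $\varpi$ combined with $\pi^\defparam$ being proper and dominant (birational). The only cosmetic difference is that where the paper cites the Stacks Project for flatness of the $\Proj$, you verify it by hand on the standard affine charts $\Spec S_{(f)}$ via localization and direct-summand arguments, which is fine.
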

\begin{proof}
\cite[Lemma 2.6]{BFN1} shows that $H_\ast^{\bG_\cO}(\cR_{{\bG}, \bN})$ is free over $\CC[\ft]^W$. More precisely, there is a non-canonical isomorphism of $\CC[\ft]^W$--modules (c.f.~also \cite[\S 6(i)]{BFN1}, \cite[Lemma 6.2]{BFM})
\begin{equation}
\label{eq: decomp into lambdas}
H_\ast^{\bG_\cO}(\cR_{{\bG}, \bN}) \cong \bigoplus_\lambda H_\ast^{\bG_\cO}(\cR_{{\bG}, \bN}^\lambda),
\end{equation}
where $\lambda$ runs over dominant coweights of $\bG$.  Here $\cR_{{\bG}, \bN}^\lambda$ denotes the preimage in $\cR_{\bG,\bN}$ of the $\bG_\cO$-orbit $\Gr^\lambda_\bG \subset \Gr_\bG$.    Up to a grading shift, there is an isomorphism $H_\ast^{\bG_\cO}(\cR_{{\bG}, \bN}^\lambda) \cong H_\ast^{\bG}( \bG / \bP_\lambda)$, where $\bG / \bP_\lambda$ is a partial flag variety corresponding to $\lambda$. Each $\CC[\ft]^W$-module $H_\ast^{\bG}( \bG / \bP_\lambda)$ is free, which proves that $H_\ast^{\bG_\cO}(\cR_{{\bG}, \bN})$ is free. In particular, the map $\varpi$ is faithfully flat.

For any $n$, the $\CC[\ft]^W$-module $H_\ast^{\bG_\cO}(\cR_{\widetilde{\bG}, \bN} (n \defparam))$ has a decomposition analogous to (\ref{eq: decomp into lambdas}), but where the sum runs over coweights $\tilde{\lambda}$ of $\widetilde{\bG}$ which are lifts of $n \defparam$. By similar reasoning as above, this proves that $H_\ast^{\bG_\cO}(\cR_{\widetilde{\bG}, \bN} (n \defparam))$ is free over $\CC[\ft]^W$. Consequently the morphism $\varpi^\defparam$ is flat\cite[\href{https://stacks.math.columbia.edu/tag/0D4C}{Tag 0D4C}]{stacks-project}.  But $\varpi^\defparam$ is also surjective: we know already that this is true of $\varpi$, while $\pi^\defparam$ is surjective since it is proper and dominant.
\end{proof}

\subsubsection{Generalized root filtration}
Following \cite[Definition 5.2]{BFN1}, a {\bf generalized root for $(\bG, \bN)$} is an element of $\ft^\ast$ which is either (I) a non-zero weight of $\bN$ or (II) a root of $\operatorname{Lie} \bG$.  Let $\varphi_1,\ldots, \varphi_r$ be the distinct generalized roots for $(\bG, \bN)$.  For any $t \in \ft$, define
\begin{equation}
\label{eq: codim function}
\codim_{\bG, \bN}(t) = \codim_\ft \Big( \bigcap_{\varphi_i(t) = 0} \mathbb{V}(\varphi_i) \Big)
\end{equation}
Here $\mathbb{V}(f) \subset \ft$ denotes the vanishing locus of a function $f$ on $\ft$. Note that the intersection on the right-hand side above is simply the intersection of all generalized root hyperplanes containing $t$.  For any $k\geq 0$, we define a subvariety of $\ft$ by
\begin{equation}
\label{eq: codim subsets}
\ft^{(k)} = \left\{ t\in \ft \ : \ \codim_{\bG, \bN}(t) \geq k \right\}
\end{equation}
It is not hard  to see that $\ft^{(k)}$ is a Zariski closed subset: it is the union of all codimension $\geq k$ intersections of generalized root hyperplanes.  In particular $\ft^{(k)}$ may be empty, but if $\ft^{(k)}\neq \emptyset$ then $\codim_\ft \ft^{(k)} = k$. 

\begin{Remark}
Up to a minor discrepancy when $\dim \ft = 2$, in the notation of \cite[\S 5(vi)]{BFN1} 
$$\ft^{\circ} = \ft \setminus \ft^{(1)}, \qquad \ft^\bullet = \ft \setminus \ft^{(2)}$$
\end{Remark}

\subsubsection{Preimages of local rings}

Let $t \in \ft$.  Let $Z_\bG(t) \subset \bG$ be the centralizer of $t$, and $\bN^t \subset \bN$ the subspace of $t$--invariants.  Then $\bT$ is a maximal torus of $Z_\bG(t)$, and the Weyl group of $Z_\bG(t)$ is  the stabilizer $W_t \subset W$ of $t$.  Note that $H_{Z_\bG(t)}^\ast(pt) \cong \CC[\ft]^{W_t}$.  Also note that
\begin{equation}
\label{eq: codim = codim}
\codim_{\bG, \bN}(t) = \codim_{Z_\bG(t), \bN^t}( 0 )
\end{equation}

As explained in \cite[\S 6(v)]{BFN1}, there is an isomorphism of rings
\begin{equation}
\label{eq: fibers BFN iso}
H_\ast^{\bG_\cO} (\cR_{\bG, \bN} ) \otimes_{\CC[\ft]^W} \CC[\ft]^{W_t}_t \ \stackrel{\sim}{\longrightarrow} \  H_\ast^{Z_\bG(t)_\cO} (\cR_{Z_\bG(t), \bN^t} ) \otimes_{\CC[\ft]^{W_t}} \CC[\ft]^{W_t}_t  
\end{equation}
Here $\CC[\ft]^{W_t}_t$ denotes the localization of $\CC[\ft]^{W_t}$ at the maximal ideal corresponding to the image of $t$ under $\ft \rightarrow \ft \sslash W_t$.  Similarly, we'll write $\CC[\ft]_t$ for localization of $\CC[\ft]$ at $t$, and $\CC[\ft]^W_t$ for the localization of $\CC[\ft]^W$ at the image of $t$ under $\ft \rightarrow \ft \sslash W$.  The group $W_t$ acts on $\CC[\ft]_t$ with invariants $(\CC[\ft]_t)^{W_t} = \CC[\ft]_t^{W}$, as follows from the fact that $\operatorname{Frac}(\CC[\ft])^{W_t} = \operatorname{Frac}(\CC[\ft]^{W_t})$. 

Since $\CC[\ft]^W_t \hookrightarrow \CC[\ft]^{W_t}_t$, by composition we obtain from (\ref{eq: fibers BFN iso}) a map of rings
\begin{equation*}
H_\ast^{\bG_\cO} (\cR_{\bG, \bN} ) \otimes_{\CC[\ft]^W} \CC[\ft]^{W}_t \hooklongrightarrow  H_\ast^{Z_\bG(t)_\cO} (\cR_{Z_\bG(t), \bN^t} ) \otimes_{\CC[\ft]^{W_t}} \CC[\ft]^{W_t}_t
\end{equation*}
Denote the corresponding affine schemes by $\cM_C(\bG, \bN)_t$ and $\cM_C(Z_\bG(t), \bN^t)_t$, respectively.  Then we can rephrase (\ref{eq: fibers BFN iso}) as a Cartesian diagram of schemes:
\begin{equation}
\label{eq: fibers map 2}
\begin{tikzcd}
\cM_C(Z_\bG(t), \bN^t)_t \ar[r] \ar[d] & \cM_C(\bG, \bN)_t \ar[d] \\
\Spec \CC[\ft]^{W_t}_t \ar[r] & \Spec \CC[\ft]^{W}_t 
\end{tikzcd}
\end{equation}

This picture extends to include flavour resolutions. In the notation of Section \ref{section: flavour resolutions},  first observe that there is a corresponding exact sequence of centralizer subgroups
\begin{equation*}
1 \longrightarrow Z_\bG(t) \longrightarrow Z_{\widetilde{\bG}}(t) \longrightarrow \bF \longrightarrow 1
\end{equation*}
In particular, for any $\defparam\in \cochar$ there is a corresponding variety $\cM_C^\defparam(Z_\bG(t), \bN^t)$.  Note that $
\bigoplus_{n\geq 0}H_\ast^{\bG_\cO} \big(\cR_{\widetilde{\bG}, \bN} ( n \defparam) \big)
$
is a graded algebra over $\CC[\ft]^W$, which lies in degree zero.  Thus $\Proj$ is compatible with base change to any trivially graded $\CC[\ft]^W$--algebra, see \cite[\href{https://stacks.math.columbia.edu/tag/01N2}{Tag 01N2}]{stacks-project}, and in particular:
\begin{equation}
\label{eq: base change}
\cM^\defparam_C(\bG, \bN) \times_{\ft \sslash W} \Spec \CC[\ft]^W_t \ \cong \ \Proj \Big( \bigoplus_{n \geq 0} H_\ast^{\bG_\cO}\big( \cR_{\widetilde{\bG}, \bN} ( n \defparam)\big) \otimes_{\CC[\ft]^W} \CC[\ft]^W_t \Big)
\end{equation}
We denote this scheme by $\cM_C^\defparam(\bG, \bN)_t$.  Similarly, we write $\cM_C^\defparam(Z_{\bG}(t), \bN^t)_t$ for an analogous base change along $\CC[\ft]^{W_t} \rightarrow \CC[\ft]^{W_t}_t$.  

The following result directly generalizes (\ref{eq: fibers map 2}):
\begin{Theorem}
\label{prop: fibers via centralizers and fixed}
For any $\defparam\in \cochar$ and $t\in \ft$, there is a Cartesian diagram of schemes:
$$
\begin{tikzcd}
\cM_C^\defparam(Z_\bG(t), \bN^t)_t \ar[r] \ar[d]  & \cM_C^\defparam(\bG, \bN)_t \ar[d] \\
\Spec \CC[\ft]^{W_t}_t \ar[r] & \Spec \CC[\ft]^{W}_t
\end{tikzcd}
$$
Moreover, the horizontal arrows are both \'etale and surjective.
\end{Theorem}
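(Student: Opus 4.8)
The plan is to reduce the statement to a graded refinement of the localization isomorphism (\ref{eq: fibers BFN iso}), now applied to the larger moduli space $\cR_{\widetilde{\bG},\bN}$, and then to pass to $\Proj$ using its compatibility with base change on trivially-graded scalars. Recall from (\ref{eq: base change}) that $\cM_C^\defparam(\bG,\bN)_t$ is by definition $\Proj$ of the graded $\CC[\ft]^W_t$-algebra $\bigoplus_{n\ge 0} H_\ast^{\bG_\cO}(\cR_{\widetilde{\bG},\bN}(n\defparam))\otimes_{\CC[\ft]^W}\CC[\ft]^W_t$, while $\cM_C^\defparam(Z_\bG(t),\bN^t)_t$ is the analogous $\Proj$ of $\bigoplus_{n\ge 0} H_\ast^{Z_\bG(t)_\cO}(\cR_{Z_{\widetilde{\bG}}(t),\bN^t}(n\defparam))\otimes_{\CC[\ft]^{W_t}}\CC[\ft]^{W_t}_t$, where $Z_{\widetilde{\bG}}(t)$ is the centralizer fitting into $1\to Z_\bG(t)\to Z_{\widetilde{\bG}}(t)\to\bF\to 1$. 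Thus it suffices to produce, for each $n\ge 0$, a ring-compatible isomorphism
\begin{equation*}
H_\ast^{\bG_\cO}\big(\cR_{\widetilde{\bG},\bN}(n\defparam)\big)\otimes_{\CC[\ft]^W}\CC[\ft]^{W_t}_t \ \cong \ H_\ast^{Z_\bG(t)_\cO}\big(\cR_{Z_{\widetilde{\bG}}(t),\bN^t}(n\defparam)\big)\otimes_{\CC[\ft]^{W_t}}\CC[\ft]^{W_t}_t .
\end{equation*}

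The heart of the argument, and the step I expect to be the main obstacle, is to promote (\ref{eq: fibers BFN iso}) to this graded form. The isomorphism of \cite[\S 6(v)]{BFN1} arises from a fixed-point localization identifying the $t$-fixed locus of $\cR_{\bG,\bN}$ with $\cR_{Z_\bG(t),\bN^t}$ after inverting the relevant Euler classes. I would first verify that this construction applies verbatim with $\cR_{\widetilde{\bG},\bN}$ in place of $\cR_{\bG,\bN}$ (still taking $\bG_\cO$-equivariant homology), using that $Z_{\widetilde{\bG}}(t)$ preserves $\bN^t$ and that $\bT$ remains a maximal torus of $Z_\bG(t)$. The delicate point is grading-compatibility: the $\cochar$-grading (\ref{eq: grading by coweights}) is induced by the projection $\pi$ to $\Gr_\bF$, and since $t\in\ft=\Lie\bG$ maps to $0$ in $\Lie\bF$, the element $t$ acts trivially on the flavour direction. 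Hence the localization at $t$ should preserve each graded piece $\cR_{\widetilde{\bG},\bN}(n\defparam)$ and match it with $\cR_{Z_{\widetilde{\bG}}(t),\bN^t}(n\defparam)$, the latter graded by its own projection to the same $\Gr_\bF$ supplied by the centralizer exact sequence. Checking that BFN's localization genuinely respects $\pi$, and not merely the ungraded homology, is where I would spend the most care.

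Granting the graded isomorphism, I would compose with $\CC[\ft]^W_t\hookrightarrow\CC[\ft]^{W_t}_t$ exactly as in the unflavoured case (\ref{eq: fibers map 2}), sum over $n$, and use transitivity of tensor products to recognise the resulting graded $\CC[\ft]^{W_t}_t$-algebra as the base change along $\CC[\ft]^W_t\to\CC[\ft]^{W_t}_t$ of the defining algebra of $\cM_C^\defparam(\bG,\bN)_t$. Applying $\Proj$ together with its compatibility with base change on degree-zero scalars (\cite[\href{https://stacks.math.columbia.edu/tag/01N2}{Tag 01N2}]{stacks-project}, as already used for (\ref{eq: base change})) then identifies $\cM_C^\defparam(Z_\bG(t),\bN^t)_t$ with $\cM_C^\defparam(\bG,\bN)_t\times_{\Spec\CC[\ft]^W_t}\Spec\CC[\ft]^{W_t}_t$, giving the Cartesian square. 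Finally, for the \'etale and surjective claim it suffices to treat the bottom arrow $\Spec\CC[\ft]^{W_t}_t\to\Spec\CC[\ft]^W_t$, since both properties are stable under the base change producing the top arrow. Here I would invoke Steinberg's theorem: $W_t$ is generated by the reflections of $W$ fixing $t$, so every reflection hyperplane of $W$ through $t$ already belongs to $W_t$; thus the extra ramification of $\ft\sslash W_t\to\ft\sslash W$ avoids the image of $t$, and the map is unramified (hence \'etale, flatness being standard for this finite covering) at $[t]$. An \'etale local homomorphism is faithfully flat and therefore surjective on spectra, which gives the bottom arrow, and base change then completes the proof.
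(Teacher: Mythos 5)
Your proposal is correct and follows essentially the same route as the paper: a $\cochar$-graded refinement of the localization isomorphism (\ref{eq: fibers BFN iso}) for $\cR_{\widetilde{\bG},\bN}$, followed by $\Proj$ and its compatibility with base change on degree-zero scalars, with \'etaleness and surjectivity reduced to the bottom arrow $\Spec\CC[\ft]^{W_t}_t \to \Spec\CC[\ft]^W_t$ and transported upward by base change. You even isolate the same key point the paper addresses -- that the localization respects the grading induced by $\pi:\cR_{\widetilde{\bG},\bN}\to\Gr_\bF$ -- and resolve it the same way (the paper runs BFN's correspondence $\bz^\ast\iota_\ast^{-1}$ at the $\bT_\cO$-equivariant level over $\Gr_\bF$ and then takes $W_t$-invariants, which is the spelled-out version of your appeal to \cite[\S 6(v)]{BFN1}).
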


\begin{proof}
By a variation on the far right column of the diagram \cite[Equation (5.24)]{BFN1}, there is a diagram of $\CC[\ft]$--algebras
$$
\begin{tikzcd}
H_\ast^{\bT_\cO}(\cR_{Z_{\widetilde{\bG}}(t), \bN^t}) & H_\ast^{\bT_\cO}(\cR_{Z_{\widetilde{\bG}}(t), \bN}) \ar[l,"\bz^\ast"'] \ar[r, "\iota_\ast"] & H_\ast^{\bT_\cO}(\cR_{\widetilde{\bG}, \bN}) 
\end{tikzcd}
$$
Both maps respect gradings by coweights $\cochar$, defined as in (\ref{eq: grading by coweights}), since they are induced by maps of spaces over $\Gr_\bF$. Both maps also respect natural $W_t$--actions, by \cite[Lemma 3.19]{BFN1}.  Finally, by the localization theorem both maps become isomorphisms over $\CC[\ft]_t$.  Therefore, the map $ \bz^\ast \iota_\ast^{-1}$  induces a $W_t$--equivariant isomorphism of $\cochar$-graded algebras
\begin{equation*}
 H_\ast^{\bT_\cO} \big( \cR_{\widetilde{\bG}, \bN} \big) \otimes_{\CC[\ft]} \CC[\ft]_t  \ \stackrel{\sim}{\longrightarrow} \ H_\ast^{\bT_\cO} \big(\cR_{Z_{\widetilde{\bG} }(t), \bN^t} \big)\otimes_{\CC[\ft]} \CC[\ft]_t 
\end{equation*}

Taking $W_t$-invariants on both sides, we claim that we obtain an isomorphism 
\begin{equation}
\label{eq: localization isomorphism with flavour 2} 
 H_\ast^{\bG_\cO} \big( \cR_{\widetilde{\bG}, \bN} \big) \otimes_{\CC[\ft]^W} \CC[\ft]^{W_t}_t  \ \stackrel{\sim}{\longrightarrow} \  H_\ast^{{Z_{\bG}(t)}_\cO} \big(\cR_{Z_{\widetilde{\bG} }(t), \bN^t} \big)\otimes_{\CC[\ft]^{W_t}} \CC[\ft]^{W_t}_t 
\end{equation}
To prove this claim, we identify the left side of (\ref{eq: localization isomorphism with flavour 2}) with the $W_t$--invariants of the left side of the preceding equation; a similar argument applies to the right side.   We have $H_\ast^{\bT_\cO} (\cR_{\widetilde{\bG}, \bN} ) \cong H_\ast^{\bG_\cO} (\cR_{\widetilde{\bG}, \bN} ) \otimes_{\CC[\ft]^W} \CC[\ft]$  by \cite[Lemma 5.3]{BFN1}, and therefore
\begin{equation}
\label{eq: base changing}
\phantom{\cong} H_\ast^{\bT_\cO} \big( \cR_{\widetilde{\bG}, \bN} \big) \otimes_{\CC[\ft]} \CC[\ft]_t  \cong H_\ast^{\bG_\cO} \big( \cR_{\widetilde{\bG}, \bN} \big) \otimes_{\CC[\ft]^W}\CC[\ft] \otimes_{\CC[\ft]} \CC[\ft]_t \cong H_\ast^{\bG_\cO} \big( \cR_{\widetilde{\bG}, \bN} \big) \otimes_{\CC[\ft]^W}  \CC[\ft]_t 
\end{equation}
By the proof of Lemma \ref{lemma: faithfully flat}, $H_\ast^{\bG_\cO}(\cR_{\widetilde{\bG},\bN})$ is free over $\CC[\ft]^W$. Both carry trivial actions of $W_t$, while $(\CC[\ft]_t)^{W_t} = \CC[\ft]_t^{W_t}$.  Taking $W_t$--invariants on the right side of (\ref{eq: base changing}), we get the left side of (\ref{eq: localization isomorphism with flavour 2}).

%

The isomorphism (\ref{eq: localization isomorphism with flavour 2}) again respects $\cochar$-gradings.  Let us restrict it to the direct sum of all graded pieces of degrees $n \defparam \in \cochar$, where $n\geq 0$, and then take $\Proj$ of both sides.  On the right side we obtain $\cM_C^\defparam(Z_\bG(t), \bN^t)_t$. On the left side we obtain the base change of $\cM_C^\defparam(\bG,\bN)_t$ along $\Spec \CC[\ft]^{W_t}_t \rightarrow \Spec \CC[\ft]^{W}_t$, because  of  (\ref{eq: base change}).  This establishes the desired Cartesian diagram.


Finally, \'etale morphisms are stable under base change \cite[\href{https://stacks.math.columbia.edu/tag/02GO}{Tag 02GO}]{stacks-project}, as are surjective morphisms \cite[\href{https://stacks.math.columbia.edu/tag/01S1}{Tag 01S1}]{stacks-project}, so it suffices to prove that the bottom horizontal arrow above is \'etale and surjective.  To this end we observe that the natural map $\ft \sslash W_t \rightarrow \ft \sslash W$ is \'etale at $t$, and so the map of local rings $\CC[\ft]^{W}_t \hookrightarrow \CC[\ft]^{W_t}_t$ is \'etale. It is also surjective, so we are done.
\end{proof}

Since \'etale maps preserve smoothness and normality, we immediately obtain the following:
\begin{Corollary}
\label{cor: etale smooth}
$\cM_C^\defparam (\bG, \bN)_t$ is smooth (resp.~normal) iff $\cM_C^\defparam (Z_\bG(t),\bN^t)_t$ is smooth (resp.~normal).
\end{Corollary}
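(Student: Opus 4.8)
The plan is to read off everything from the top horizontal arrow of the Cartesian square in Theorem~\ref{prop: fibers via centralizers and fixed}, namely the \'etale, surjective morphism
$$
g : \cM_C^\defparam(Z_\bG(t), \bN^t)_t \longrightarrow \cM_C^\defparam(\bG, \bN)_t .
$$
Both schemes are essentially of finite type over the perfect field $\CC$, so a point is smooth precisely when the corresponding local ring is regular; I would therefore argue at the level of local rings and translate back at the end. The only structural input needed is the nature of $g$: being \'etale it is smooth of relative dimension $0$, hence flat with geometrically regular fibres, and being moreover surjective it is faithfully flat. In particular, for each point $x$ upstairs the induced map $\cO_{\cM_C^\defparam(\bG, \bN)_t,\, g(x)} \to \cO_{\cM_C^\defparam(Z_\bG(t), \bN^t)_t,\, x}$ is a flat local homomorphism of Noetherian local rings whose closed fibre is regular.

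For the smoothness assertion I would invoke the standard fact that, for such a flat local homomorphism with regular closed fibre, the source is regular if and only if the target is. This yields, for every $x$, the equivalence of regularity of $\cO_{\cM_C^\defparam(\bG, \bN)_t,\, g(x)}$ with that of $\cO_{\cM_C^\defparam(Z_\bG(t), \bN^t)_t,\, x}$. If the target is regular then so are all these local rings and hence the source; conversely, if the source is regular then each $\cO_{\cM_C^\defparam(\bG, \bN)_t,\, g(x)}$ is regular, and since $g$ is surjective every local ring on the target is of this form, so the target is regular. Over $\CC$ regularity is the same as smoothness, giving the smoothness half of the corollary.

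For normality I would treat the two implications separately, since they rest on different general principles. Normality ascends along flat morphisms with geometrically normal fibres; the fibres of $g$ are geometrically regular, in particular geometrically normal, so normality of the target forces normality of the source. In the opposite direction $g$ is faithfully flat, and normality---equivalently Serre's conditions $(R_1)$ and $(S_2)$---descends along faithfully flat morphisms, so normality of the source forces normality of the target. Combining the two gives the normality half.

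I do not anticipate a genuine obstacle: all of the substance is already contained in Theorem~\ref{prop: fibers via centralizers and fixed}, and what remains is the routine transfer of regularity and normality across a faithfully flat \'etale surjection. The one point I would be careful about is the bookkeeping of directions. Ascent of smoothness and normality uses only flatness together with the good fibres of an \'etale map, whereas the reverse descent genuinely requires faithful flatness, i.e.\ surjectivity; so I would take care to invoke surjectivity exactly where descent is used and to claim nothing beyond what \'etaleness alone supplies.
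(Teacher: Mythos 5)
Your proposal is correct and follows the same route as the paper: the paper deduces the corollary immediately from the \'etale surjective top arrow of the Cartesian square in Theorem~\ref{prop: fibers via centralizers and fixed}, noting that \'etale maps preserve smoothness and normality. Your write-up merely makes explicit the standard commutative algebra (ascent along flat local maps with regular fibres, descent along faithfully flat maps) that the paper leaves implicit, with the direction bookkeeping handled correctly.
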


The benefit of this result is that the theory $(Z_\bG(t), \bN^t)$ is often far simpler than the original theory $(\bG, \bN)$, and in particular the corresponding Coulomb branch may be better understood.

Next, we give a generalization of \cite[Corollary 5.12]{BFN1}:
\begin{Lemma}
\label{cor: integral}
$\cM_C^\defparam(\bG,\bN)$ is an integral scheme.
\end{Lemma}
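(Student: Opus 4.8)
The plan is to prove that $\cM^\defparam_C(\bG,\bN) = \Proj S$ is integral by showing that the homogeneous coordinate ring
$$
S \DEF \bigoplus_{n \geq 0} H_\ast^{\bG_\cO}\big(\cR_{\widetilde{\bG},\bN}(n\defparam)\big)
$$
is an integral domain. Since $\Proj$ of a graded integral domain with nonzero positive part is an integral scheme, and here the positive part is nonzero because $\pi^\defparam$ is surjective (so $\cM_C^\defparam(\bG,\bN) \neq \emptyset$), this will suffice. By (\ref{eq: grading by coweights}), $S$ is the $\cochar$--graded subring of $R \DEF H_\ast^{\bG_\cO}(\cR_{\widetilde\bG, \bN})$ supported on the submonoid $\{\, n\defparam : n \geq 0 \,\} \subset \cochar$; as any subring of a domain is a domain, it is enough to prove that $R$ itself is an integral domain.

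By the proof of Lemma \ref{lemma: faithfully flat}, $R$ is free, and in particular torsion-free, as a module over $\CC[\ft]^W$. Hence $R$ embeds into its generic fiber $R \otimes_{\CC[\ft]^W} \operatorname{Frac}(\CC[\ft]^W)$ over the integrable system base, and it suffices to show that this generic fiber is a domain. Since $\operatorname{Frac}(\CC[\ft])$ is a finite, hence faithfully flat, field extension of $\operatorname{Frac}(\CC[\ft]^W) = \operatorname{Frac}(\CC[\ft])^W$, the generic fiber embeds further into $R\otimes_{\CC[\ft]^W}\operatorname{Frac}(\CC[\ft])$, so it is enough to prove that this last ring is a domain.

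To compute $R \otimes_{\CC[\ft]^W}\operatorname{Frac}(\CC[\ft])$ I would localize at a generic $t \in \ft$, for which $W_t$ is trivial, $Z_\bG(t) = \bT$, $Z_{\widetilde\bG}(t) = \widetilde\bT$, and $\bN^t = \bN_0$ is the zero-weight space of $\bN$. Applying the isomorphism (\ref{eq: localization isomorphism with flavour 2}) (equivalently, the étale base change of Theorem \ref{prop: fibers via centralizers and fixed}) in this case and then localizing further at $(0)$, one identifies
$$
R \otimes_{\CC[\ft]^W} \operatorname{Frac}(\CC[\ft]) \ \cong \ H_\ast^{\bT_\cO}\big(\cR_{\widetilde{\bT}, \bN_0}\big) \otimes_{\CC[\ft]} \operatorname{Frac}(\CC[\ft]),
$$
a localization of the abelian Coulomb branch ring $H_\ast^{\bT_\cO}(\cR_{\widetilde\bT, \bN_0})$. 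By the explicit description of abelian Coulomb branches in \cite{BFN1}, this ring is an integral domain: its graded pieces are freely indexed by the coweight lattice and the convolution product is given by monomials, so it embeds into a Laurent polynomial ring over $\operatorname{Frac}(\CC[\ft])$. It follows that the generic fiber is a domain, hence $R$ is a domain, hence $S$ is a domain, and therefore $\cM_C^\defparam(\bG,\bN) = \Proj S$ is integral.

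The step I expect to be the main obstacle is the identification of the generic fiber with the abelian theory $(\widetilde\bT, \bN_0)$ together with the verification that the associated abelian Coulomb branch ring is a domain; this is precisely the content generalizing \cite[Cor.~5.12]{BFN1}. The care required lies not in any single estimate but in correctly tracking the $\cochar$--grading and the passage from $\bT$-- to $\widetilde\bT$--geometry (and from $\bG_\cO$-- to $\widetilde\bG_\cO$--equivariance) through the localization isomorphism, so that the flavoured generic fiber really reduces to a toric ring whose domain property is transparent.
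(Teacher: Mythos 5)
Your overall strategy is the same as the paper's: reduce to showing that $R = H_\ast^{\bG_\cO}(\cR_{\widetilde{\bG},\bN})$ is a domain (your observation that $S$ is the subring of $R$ supported on the submonoid $\ZZ_{\geq 0}\defparam$ is exactly the paper's first line), embed $R$ into its localization at the generic point of $\ft \sslash W$ using freeness, and identify that localization with an abelianized ring. Your repackagings here are harmless: using freeness over $\CC[\ft]^W$ in place of the paper's passage through $W$-invariants of $H_\ast^{\bT_\cO}(\cR_{\widetilde{\bG},\bN})$, and invoking the already-established isomorphism (\ref{eq: localization isomorphism with flavour 2}) at a generic $t$ rather than re-running the localization diagram, are both fine and non-circular. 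The genuine gap is in your last step, and it sits precisely where you predicted trouble. You stop at $H_\ast^{\bT_\cO}(\cR_{\widetilde{\bT},\bN_0})$ with $\bN_0 = \bN^t$ and assert it is a domain because ``the convolution product is given by monomials.'' The monomial formula of \cite[\S 4]{BFN1} is for the \emph{fully equivariant} ring of an abelian theory. Here the equivariance is only $\bT_\cO$ while the components are indexed by $X_\ast(\widetilde{\bT})$, so the structure constants are Euler classes in $H_\bT^\ast(pt) = \CC[\ft]$: products of the restrictions $\tilde{\chi}|_\ft$ of the $\widetilde{\bT}$-weights $\tilde{\chi}$ of $\bN_0$. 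These restrictions vanish identically, since $\bN_0$ is by definition the zero $\bT$-weight space. Hence if some flavour weight on $\bN_0$ pairs nontrivially with $\tilde{\lambda} \in X_\ast(\widetilde{\bT})$, one gets $r_{\tilde{\lambda}} \, r_{-\tilde{\lambda}} = 0$ with $r_{\pm\tilde{\lambda}} \neq 0$ (the classes are free module generators, so they survive localization): the ring has honest zero divisors and does \emph{not} embed into a Laurent polynomial ring. A concrete failure case is a vertex with a loop whose edge-flavour $\CC^\times$ scales $\Hom(V_i,V_i)$.

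The paper closes this by restricting one step further: its map $\bz^\ast$ lands in the \emph{matterless} ring $H_\ast^{\bT_\cO}(\Gr_{\widetilde{\bT}}) = \CC[\ft \times \widetilde{\bT}^\vee]$, the group algebra of $X_\ast(\widetilde{\bT})$ over $\CC[\ft]$, which is transparently a domain, and the composite $\bz^\ast \iota_\ast^{-1}$ gives the embedding over $\CC(\ft)$. Your argument does close in the regime where the flavour weights on $\bN^\bT$ are trivial --- in particular when $\bN^\bT = 0$, which holds for all simple-quiver gauge theories since every $\bT$-weight of $\bN$ in (\ref{groups from quiver data}) is nonzero --- because then $\cR_{\widetilde{\bT}, \bN_0}$ contributes exactly the group algebra and your final ring coincides with the paper's. (One can note in fairness that the paper's assertion that $\bz^\ast$ becomes an isomorphism over $\CC(\ft)$ tacitly relies on the same benign-weight situation, so your proof is not less general; but your stated justification for the domain property, via nonzero monomial structure constants, is incorrect as written, and the correct justification is the degeneration to the group algebra rather than the abelian multiplication formula.)
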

\begin{proof}
It suffices to prove that $H_\ast^{\bG_\cO}(\cR_{\widetilde{\bG},\bN})$ is a domain, since $\cM_C^\defparam(\bG,\bN)$ is defined as the $\Proj$ of its subalgebra. We will use a variation on the top row of \cite[Equation (5.24)]{BFN1}:
$$
\begin{tikzcd}
\CC[\ft \times \widetilde{\bT}^\vee] = H_\ast^{\bT_\cO}(\Gr_{\widetilde{\bT}}) & H_\ast^{\bT_\cO}(\cR_{\widetilde{\bT},\bN}) \ar[l,"\bz^\ast"'] \ar[r,"\iota_\ast"] & H_\ast^{\bT_\cO}(\bR_{\widetilde{\bG},\bN})
\end{tikzcd}
$$
By the localization theorem, the resulting morphisms become isomorphisms over $\CC(\ft)$. Since $H_\ast^{\bT_\cO}(\bR_{\widetilde{\bG},\bN})$ is free over $\CC[\ft]$, it follows that there is an injection
$$
\bz^\ast (\iota_\ast)^{-1}: H_\ast^{\bT_\cO}(\bR_{\widetilde{\bG},\bN}) \hooklongrightarrow \CC[\ft \times \widetilde{\bT}^\vee]\otimes_{\CC[\ft]} \CC(\ft)
$$
Next, as in \cite[Lemma 5.3]{BFN1} we have 
$$
H_\ast^{\bG_\cO}(\cR_{\widetilde{\bG},\bN}) \cong H_\ast^{\bT_\cO}(\cR_{\widetilde{\bG},\bN})^W \hooklongrightarrow H_\ast^{\bT_\cO}(\cR_{\widetilde{\bG},\bN}),
$$
Altogether, we see that $H_\ast^{\bG_\cO} \big( \cR_{\widetilde{\bG}, \bN} \big)$ embeds into the domain $\CC[\ft\times \widetilde{\bT}^\vee] \otimes_{\CC[\ft]}\CC(\ft)$.
\end{proof}

\begin{Proposition}
\label{Prop: Poisson maps}
The morphism $\cM_C^\defparam(Z_\bG(t),\bN^t)_t \rightarrow \cM_C^\defparam(\bG,\bN)_t$ is Poisson.
\end{Proposition}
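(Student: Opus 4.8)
The plan is to reduce the assertion to the affine case $\defparam=0$ by means of the partial resolution maps $\pi^\defparam$, and then to settle that case by lifting the localization isomorphism to the loop-rotation-equivariant level. I will use repeatedly that being Poisson is a closed condition on a morphism: the identity $f^\ast\{g_1,g_2\} = \{f^\ast g_1, f^\ast g_2\}$ is an equality of regular functions, so for $f$ the morphism in question it suffices to verify it over a dense open of the source. Both source and target are integral by Lemma \ref{cor: integral} (localization preserving integrality), so this reduction is legitimate. Write $f : \cM_C^\defparam(Z_\bG(t),\bN^t)_t \to \cM_C^\defparam(\bG,\bN)_t$ for the morphism to be shown Poisson, and $f_0$ for its $\defparam=0$ analogue.

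For the reduction I would place $f$ in the commutative square
$$
\begin{tikzcd}
\cM_C^\defparam(Z_\bG(t),\bN^t)_t \ar[r,"f"] \ar[d,"\pi^\defparam_Z"'] & \cM_C^\defparam(\bG,\bN)_t \ar[d,"\pi^\defparam"] \\
\cM_C(Z_\bG(t),\bN^t)_t \ar[r,"f_0"'] & \cM_C(\bG,\bN)_t
\end{tikzcd}
$$
whose vertical maps are the canonical morphisms from the $\Proj$ to the $\Spec$ of its degree-zero piece. This square commutes because the isomorphism (\ref{eq: localization isomorphism with flavour 2}) is graded for the $\cochar$-grading, so that $f$ and $f_0$ are induced by one and the same isomorphism on different graded summands. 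The maps $\pi^\defparam$ and $\pi^\defparam_Z$ are birational (\cite[Remark 1.1]{BFN4}, cf.~(\ref{eq: partial reso})) and Poisson (Remark \ref{rem: hamiltonian reduction}), and both properties are preserved by the localizing base change along $\CC[\ft]^W \hookrightarrow \CC[\ft]^W_t$. Hence each is a Poisson isomorphism over a dense open subset, and since $f_0$ is étale and surjective (Theorem \ref{prop: fibers via centralizers and fixed}), there is a dense open $\Omega$ of the source over which $f$ coincides with the composite of $\pi^\defparam_Z$, then $f_0$, then $(\pi^\defparam)^{-1}$, all Poisson. Granting that $f_0$ is Poisson, $f$ is therefore Poisson over $\Omega$, hence everywhere.

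It remains to treat $f_0$, and here lies the real content. The Poisson bracket on every Coulomb branch is, by construction, the classical limit of the noncommutative deformation carried by $\bG_\cO \rtimes \CC^\times$-equivariant Borel-Moore homology, with the loop-rotation torus supplying the deformation parameter $\hbar$. The crucial observation is that the entire construction behind the localization isomorphism (\ref{eq: fibers BFN iso}) — the maps $\bz^\ast$ and $\iota_\ast$ and the localization-theorem identification of \cite[\S 6(v)]{BFN1} — is equivariant for loop rotation, and so upgrades to an isomorphism of the corresponding $\hbar$-deformed algebras which specializes to (\ref{eq: fibers BFN iso}) at $\hbar = 0$. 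Because any homomorphism of such quantizations transports the bracket $\tfrac1\hbar[-,-]\bmod\hbar$, its classical limit $f_0^\ast$ is automatically a homomorphism of Poisson algebras; that is, $f_0$ is Poisson.

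The hard part is exactly this last verification: that the localization isomorphism of \cite{BFN1} genuinely respects the quantization, that its $\hbar=0$ specialization is (\ref{eq: fibers BFN iso}), and that passage to $W_t$-invariants is compatible with the deformation. The birational reduction above is precisely what lets me avoid analyzing the Poisson structure of the $\Proj$ — equivalently the Hamiltonian reduction of Remark \ref{rem: hamiltonian reduction} — directly at the quantum level; the only remaining bookkeeping is to confirm commutativity of the square and that the isomorphism loci of $\pi^\defparam$ and $\pi^\defparam_Z$ pull back to dense opens, which follows from surjectivity of $f_0$ and integrality.
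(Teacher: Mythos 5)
Your reduction to the $\defparam=0$ case is exactly the paper's first step: the same commutative square relating $\cM_C^\defparam(Z_\bG(t),\bN^t)_t \to \cM_C^\defparam(\bG,\bN)_t$ to its affine analogue, with the vertical maps Poisson by Remark \ref{rem: hamiltonian reduction} and birational by \cite[Remark 1.1]{BFN4}, integrality from Lemma \ref{cor: integral}, and the bracket identity checked on a dense open (the paper packages this as an argument in the style of \cite[Lemma 4.12]{KPW}). That half is correct and matches the paper.

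The genuine gap is exactly where you flag it. You assert that the localization isomorphism (\ref{eq: fibers BFN iso}) upgrades to an isomorphism of the $\hbar$-deformed (loop-rotation-equivariant) algebras compatibly with $W_t$-invariants and with specialization at $\hbar=0$, but you never establish this, and it is genuinely delicate: at the quantum level the fixed-point localization underlying $\bz^\ast$ and $\iota_\ast$ requires inverting elements of the form $\varphi + k\hbar$ (generalized roots shifted by integer multiples of $\hbar$), so the naive base change $\otimes_{\CC[\ft]^W}\CC[\ft]^{W}_t$ ``at the point $t$'' does not literally carry over to the deformed setting, and the compatibility of $W_t$-invariants with the deformation would itself need an argument. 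The paper circumvents quantizing the isomorphism altogether: by commutativity of \cite[Equation (5.24)]{BFN1} one has a triangle in which both localized algebras $H_\ast^{\bG_\cO}(\cR_{\bG,\bN})\otimes_{\CC[\ft]^W}\CC[\ft]^{W}_t$ and $H_\ast^{Z_\bG(t)_\cO}(\cR_{Z_\bG(t),\bN^t})\otimes_{\CC[\ft]^{W_t}}\CC[\ft]^{W_t}_t$ embed into the single Poisson algebra $\CC[\ft\times\bT]\otimes_{\CC[\ft]}\CC(\ft)$, with (\ref{eq: fibers BFN iso}) as the horizontal arrow. Only these two \emph{embeddings} need to be Poisson, and that is already available: they are classical limits of quantized embeddings by \cite[Remark 5.23]{BFN1}. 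Since the embeddings are injective and the triangle commutes, the horizontal arrow is forced to intertwine the brackets. If you replace your final step by this triangle argument, your proof closes without redoing any of the quantum bookkeeping you identified as the hard part.
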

\begin{proof}
The following commutative square relates those from (\ref{eq: fibers map 2}) and Theorem \ref{prop: fibers via centralizers and fixed}:
$$
\begin{tikzcd}
\cM_C^\defparam(Z_\bG(t), \bN^t)_t \ar[r] \ar[d]  & \cM_C^\defparam(\bG, \bN)_t \ar[d] \\ 
\cM_C(Z_\bG(t), \bN^t)_t \ar[r]& \cM_C(\bG, \bN)_t  
\end{tikzcd}
$$
The vertical arrows are Poisson by Remark \ref{rem: hamiltonian reduction}, and birational by \cite[Remark 1.1]{BFN4}. Moreover all the objects above are integral schemes.  Therefore, by a similar argument to \cite[Lemma 4.12]{KPW}, to prove that the top arrow is Poisson, it suffices to prove that the bottom arrow is Poisson.

Because of the commutativity of the diagram \cite[Equation (5.24)]{BFN1}, there is a triangle
$$
\begin{tikzcd}[sep = tiny]
 H_\ast^{\bG_\cO} \big( \cR_{\bG, \bN} \big) \otimes_{\CC[\ft]^W} \CC[\ft]^{W}_t \ar[dr,hook] \ar[rr,hook] & & H_\ast^{{Z_{\bG}(t)}_\cO} \big(\cR_{Z_{{\bG}}(t), \bN^t} \big)\otimes_{\CC[\ft]^{W_t}} \CC[\ft]^{W_t}_t  \ar[dl,hook']\\
 & \CC[\ft \times {\bT}]\otimes_{\CC[\ft]} \CC(\ft) &
\end{tikzcd}
$$
The horizontal arrow comes from (\ref{eq: fibers BFN iso}), while the downward arrows are both defined as in the proof of Proposition \ref{cor: integral}.  Both downward embeddings can be quantized, see \cite[Remark 5.23]{BFN1}, and in particular they are Poisson. This implies that the horizontal arrow is also Poisson, as claimed.
\end{proof}

%
%
%
%
%
%

\subsection{Symplectic singularities}
\label{section: SS}

Let $X$ be a normal Poisson algebraic variety over $\CC$, such that the restriction of its Poisson structure to the smooth locus $X^{reg}$ is non-degenerate.  Denote the corresponding symplectic form on $X^{reg}$ by $\omega^{reg}$.  Following \cite[Def.~1.1]{Beauville}, we say that $X$ has {\bf symplectic singularities} if for some (equivalently, any) resolution of singularities $\pi : Y \rightarrow X$, the form $\pi^\ast(\omega^{reg})$ extends to a regular 2-form on all of $Y$.  See \cite{Fu,Bellamy} for excellent overviews of these varieties and related topics.

The next result is inspired by \cite[Prop.~6.12, 6.15]{BFN1} as well as \cite[Prop.~2.8]{BFM}.
\begin{Theorem}
\label{criterion: normal symplectic}
Fix $\defparam \in \cochar$.  Suppose that $\cM_C^\defparam(Z_\bG(t), \bN^t)_t$ is normal, and that its Poisson structure is symplectic on its smooth locus, for all $t\in \ft \setminus \ft^{(2)}$.  Then $\cM_C^\defparam(\bG,\bN)$ is normal, and its Poisson structure is symplectic on its smooth locus.
\end{Theorem}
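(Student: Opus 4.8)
The plan is to check both conclusions first on a large open subset coming from the local models of Theorem~\ref{prop: fibers via centralizers and fixed}, and then to globalize using the integrable system $\varpi^\defparam$ together with the codimension of $\ft^{(2)}$. Throughout, recall that $\cM_C^\defparam(\bG,\bN)$ is integral (Lemma~\ref{cor: integral}), that it has dimension $2\operatorname{rk}\bG$ since $\pi^\defparam$ is birational, see \eqref{eq: partial reso}, and that $\varpi^\defparam$ is faithfully flat over the smooth base $\ft\sslash W=\Spec\CC[\ft]^W$ (Lemma~\ref{lemma: faithfully flat}). The generalized-root arrangement defining the sets $\ft^{(k)}$ is $W$-stable, since the roots of $\Lie\bG$ and the weights of $\bN$ are each $W$-invariant, so $\ft^{(2)}$ descends to a closed subset of $\ft\sslash W$ of codimension $\geq 2$; as $\varpi^\defparam$ is flat and surjective, codimension is preserved under preimage, and the closed set $\Sigma:=(\varpi^\defparam)^{-1}(\text{image of }\ft^{(2)})$ again has codimension $\geq 2$. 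I set $U:=\cM_C^\defparam(\bG,\bN)\setminus\Sigma$, a dense open subvariety whose complement has codimension $\geq 2$.

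I would next verify normality and non-degeneracy on $U$. For $t\in\ft\setminus\ft^{(2)}$ the local model $\cM_C^\defparam(\bG,\bN)_t$ is the localization of $\cM_C^\defparam(\bG,\bN)$ at the image $\bar t\in\varpi^\defparam(U)$, so the local rings of $U$ at its closed points occur among the local rings of these schemes. By the Cartesian diagram of Theorem~\ref{prop: fibers via centralizers and fixed} and the \'etale descent of normality (Corollary~\ref{cor: etale smooth}), each $\cM_C^\defparam(\bG,\bN)_t$ is normal once $\cM_C^\defparam(Z_\bG(t),\bN^t)_t$ is, which holds by hypothesis; hence $U$ is normal. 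For the symplectic property, the horizontal map $\cM_C^\defparam(Z_\bG(t),\bN^t)_t\to\cM_C^\defparam(\bG,\bN)_t$ is \'etale (Theorem~\ref{prop: fibers via centralizers and fixed}) and Poisson (Proposition~\ref{Prop: Poisson maps}); an \'etale Poisson morphism identifies the Poisson bivectors and preserves smoothness, so non-degeneracy on the smooth locus of the source (by hypothesis) transfers to $\cM_C^\defparam(\bG,\bN)_t$. Therefore the Poisson structure on $U$ is symplectic on $U^{reg}$.

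Global normality I would obtain from the Hamiltonian-reduction picture of Remark~\ref{rem: hamiltonian reduction}: the isomorphism $\cM_C^\defparam(\bG,\bN)\cong\cM_C(\widetilde\bG,\bN)\ssslash_\defparam\bF^\vee$ exhibits the variety as a GIT quotient, by the torus $\bF^\vee$, of $\cM_C(\widetilde\bG,\bN)$, which is normal by Theorem~\ref{thm: BFN theorem}. Since the rings of $\defparam$-semiinvariants of a normal domain are integrally closed, the quotient is normal. Alternatively, Serre's criterion applies: $R_1$ holds because every codimension-$1$ point lies in the normal open set $U$, and $S_2$ follows from the same GIT description.

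The principal step is to promote ``symplectic on $U^{reg}$'' to ``symplectic on all of $\cM_C^\defparam(\bG,\bN)^{reg}$''. Writing $2n=\dim\cM_C^\defparam(\bG,\bN)$ and letting $\sigma$ denote the Poisson bivector on the smooth locus, non-degeneracy of $\sigma$ at a point is equivalent to the non-vanishing there of $\sigma^{\wedge n}$, a section of the line bundle $\wedge^{2n}T$ on $\cM_C^\defparam(\bG,\bN)^{reg}$. By the preceding step this section is nonzero on the dense open $U^{reg}$, hence not identically zero, so its zero locus is either empty or of pure codimension $1$. But this zero locus is disjoint from $U^{reg}$ and thus contained in $\Sigma\cap\cM_C^\defparam(\bG,\bN)^{reg}$, which has codimension $\geq 2$; a nonempty divisor cannot lie in a codimension-$\geq 2$ set, so the zero locus is empty and $\sigma$ is everywhere non-degenerate on the smooth locus. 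The main obstacle is exactly this propagation across the smooth points lying over $\ft^{(2)}$: because the rank of a Poisson bivector is only lower semicontinuous, non-degeneracy on a dense open set need not extend a priori, and it is the anticanonical-section argument, combined with the flat transport of the codimension bound through $\varpi^\defparam$, that forecloses any drop in rank.
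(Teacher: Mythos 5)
Your construction of the open set $U$, the \'etale-local verification of normality and non-degeneracy via Theorem~\ref{prop: fibers via centralizers and fixed}, Corollary~\ref{cor: etale smooth} and Proposition~\ref{Prop: Poisson maps}, and the propagation of non-degeneracy by viewing the top power of the Poisson bivector as a section of $\wedge^{2n}T$ whose zero locus would be a divisor trapped in a codimension-$\geq 2$ set, all coincide with the paper's argument. The genuine gap is your global normality step. The identification in Remark~\ref{rem: hamiltonian reduction} is a \emph{Hamiltonian} reduction $\cM_C(\widetilde{\bG},\bN)\ssslash_\defparam \bF^\vee$: the graded ring $\bigoplus_{n\geq 0}H_\ast^{\bG_\cO}\big(\cR_{\widetilde{\bG},\bN}(n\defparam)\big)$ consists of semiinvariant functions on the zero fibre $\mu^{-1}(0)$ of the $\bF^\vee$-moment map, and $\CC[\mu^{-1}(0)]=H_\ast^{\bG_\cO}(\cR_{\widetilde{\bG},\bN})$ is a \emph{quotient} of $\CC[\cM_C(\widetilde{\bG},\bN)]$ by the moment-map ideal, not a subring of it. A moment-map level set inside a normal variety need not be normal, so the principle ``semiinvariants of a normal domain are integrally closed'' does not apply to the ring actually being Proj'd. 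A sanity check: taking $\defparam=0$, your argument would deduce normality of $\cM_C(\bG,\bN)$ formally from that of $\cM_C(\widetilde{\bG},\bN)$, whereas normality of Coulomb branches is a substantial theorem of \cite[Prop.~6.12]{BFN1} proved by exactly the kind of codimension-two extension argument at issue here; moreover the paper explicitly says that normality of $\cM_C^\defparam(\bG,\bN)$ is in general only \emph{expected}, and is established precisely under the hypotheses of this theorem. Your fallback via Serre's criterion has the same hole: $R_1$ is fine, since every codimension-one point lies in $U$, but $S_2$ is the entire difficulty and does not ``follow from the same GIT description'' --- neither cutting by moment-map equations nor passing to (semi)invariants formally preserves $S_2$.

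The paper closes this gap differently, and you would need its mechanism: for $V\subset \cM_C^\defparam(\bG,\bN)$ open affine, one shows every regular function on $U\cap V$ extends to $V$, and then invokes the normality criterion \cite[Lemma 6.13]{BFN1} (normal open subset with complement of codimension $\geq 2$, plus extension of functions, implies normal). The extension itself cannot be run on the total space, where Hartogs is unavailable because $S_2$ is what one is trying to prove; instead one pushes down along the integrable system: $\cF=(\varpi^\defparam|_V)_\ast\cO_V$ is a \emph{flat} quasi-coherent sheaf on the normal base $\ft\sslash W$ (Lemma~\ref{lemma: faithfully flat}), the removed set $\ft^{(2)}\sslash W$ has codimension $2$ in that base, and a flat-sheaf version of algebraic Hartogs gives $\cF\stackrel{\sim}{\to} j_\ast j^\ast\cF$, which is exactly the needed extension property. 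If you replace your GIT paragraph with this flat-descent argument, the remainder of your proof goes through as written.
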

\begin{proof}
Consider the open subscheme $U = (\varpi^{\defparam})^{-1}\big((\ft \setminus \ft^{(2)})\sslash W\big)$.  Under our assumptions, Corollary \ref{cor: etale smooth} and Proposition \ref{Prop: Poisson maps} imply that $U$ is normal, and symplectic on its smooth locus. If $\ft^{(2)} = \emptyset$, then $U = \cM_C^\defparam(\bG,\bN)$ and we are done.  Otherwise $\ft^{(2)} \neq \emptyset$, so $\operatorname{codim} \ft^{(2)} = 2$.  The map $\varpi^\defparam$ is faithfully flat by Lemma \ref{lemma: faithfully flat}, so the complement of $U$ has codimension $2$ by \cite[Cor~6.1.4]{EGAIV2}. 
Looking at smooth loci, it follows that $U^{reg}$ has complement of codimension $\geq 2$ inside   $\cM_C^\defparam(\bG,\bN)^{reg}$. The degeneracy locus of the Poisson bivector on $\cM_C^\defparam(\bG,\bN)^{reg}$ is a divisor, if non-empty. But $U^{reg}$ is symplectic, so the degeneracy locus must be trivial.  Therefore $\cM_C^\defparam(\bG,\bN)^{reg}$ is symplectic.

Next, we show that every open affine $V \subset   \cM_C^\defparam(\bG,\bN)$ is normal. Note that $U\cap V$ is normal, and the complement of $U\cap V$ in $V$ has codimension $\geq 2$. We claim that every regular function on $U\cap V$ extends to $V$.  Assuming this claim, \cite[Lemma 6.13]{BFN1} implies that $V$ is normal.

To prove the claim, consider the restricted map $\varpi^\defparam|_V: V\rightarrow \ft\sslash W$. It is flat, since $V\subset \cM_C^\defparam(\bG,\bN)$ is open and the morphism $\varpi^\defparam$ is flat by Lemma \ref{lemma: faithfully flat}.  Thus $\cF = (\varpi^\defparam|_V)_\ast \cO_V$ is a flat quasi-coherent sheaf on $\ft\sslash W$. Note that $\ft\sslash W$ is normal, and $\ft^{(2)}\sslash W$ has codimension 2.  By a version of the algebraic Hartogs's theorem, it follows that under the inclusion $j: (\ft\setminus \ft^{(2)})\sslash W \hookrightarrow \ft \sslash W$, the natural map $\cF \stackrel{\sim}{\longrightarrow} j_\ast j^\ast \cF$  is an isomorphism. This is precisely the claimed extension property.
\end{proof}

Finally, we obtain a criterion for proving that a Coulomb branch has symplectic singularities:
\begin{Theorem}
\label{criterion}
Fix $\defparam \in \cochar$.  Suppose that $\cM^\defparam_{C}( Z_\bG(t), \bN^t)_t$ is smooth and symplectic  for all $t\in \ft \setminus \ft^{(4)}$. Then $\cM_C^\defparam(\bG, \bN)$ and $\cM_C(\bG, \bN)$ have symplectic singularities.  In this case $\cM^\defparam_C(\bG, \bN)$ has terminal singularities, and its singular locus has codimension $\geq 4$.
\end{Theorem}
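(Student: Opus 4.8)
The plan is to deduce everything from two properties of $X \DEF \cM_C^\defparam(\bG,\bN)$: that it is normal and symplectic on its smooth locus, and that its singular locus has codimension at least $4$. For the first property, observe that the hypothesis is stronger than what Theorem \ref{criterion: normal symplectic} requires: since $\ft^{(4)} \subseteq \ft^{(2)}$ we have $\ft \setminus \ft^{(2)} \subseteq \ft \setminus \ft^{(4)}$, and ``smooth and symplectic'' certainly implies ``normal and symplectic on the smooth locus''. Thus Theorem \ref{criterion: normal symplectic} applies directly and shows that $X$ is normal, with Poisson structure symplectic on $X^{reg}$; I will write $\omega$ for the resulting symplectic form. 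Likewise $\cM_C(\bG,\bN)$ is normal and symplectic on its smooth locus by Theorem \ref{thm: BFN theorem}, so that the definition of symplectic singularities applies to both spaces.

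Next I would bound the singular locus of $X$. For $t \in \ft \setminus \ft^{(4)}$ the hypothesis gives that $\cM_C^\defparam(Z_\bG(t),\bN^t)_t$ is smooth, so by the \'etale and surjective Cartesian square of Theorem \ref{prop: fibers via centralizers and fixed} together with Corollary \ref{cor: etale smooth}, the base change $\cM_C^\defparam(\bG,\bN)_t = X \times_{\ft\sslash W}\Spec\CC[\ft]^W_t$ is also smooth. Since this localization shares local rings with $X$ at the points lying over $\Spec\CC[\ft]^W_t$, every point of $X$ mapping to the image $\bar t \in \ft\sslash W$ is then a smooth point. As $\ft^{(4)}$ is $W$-invariant (the generalized roots are permuted by $W$), this shows that the singular locus of $X$ is contained in $(\varpi^\defparam)^{-1}(\ft^{(4)}\sslash W)$. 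If $\ft^{(4)} = \emptyset$ then $X$ is smooth and there is nothing left to prove; otherwise $\codim_{\ft\sslash W}(\ft^{(4)}\sslash W) = 4$, and since $\varpi^\defparam$ is faithfully flat by Lemma \ref{lemma: faithfully flat}, this codimension is preserved under preimage by \cite[Cor.~6.1.4]{EGAIV2}, so the singular locus of $X$ has codimension $\geq 4$.

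With these two facts in hand, symplectic singularities for $X$ should follow from Flenner's extension theorem \cite{Flenner}: because $\omega$ is a $2$-form on the smooth locus of the normal variety $X$ and the complement of $X^{reg}$ has codimension $\geq 4 = 2+2$, the pullback of $\omega$ along any resolution $\rho : Z \to X$ extends to a regular $2$-form on $Z$, which is exactly the condition of \cite[Def.~1.1]{Beauville}. The terminality claim then follows from the general fact that a symplectic variety has terminal singularities precisely when its singular locus has codimension $\geq 4$ \cite{Namikawa}, which we have just verified. To transport symplectic singularities to $\cM_C(\bG,\bN)$, I would use that $\pi^\defparam : \cM_C^\defparam(\bG,\bN) \to \cM_C(\bG,\bN)$ is projective, birational \cite[Remark 1.1]{BFN4}, and Poisson (Remark \ref{rem: hamiltonian reduction}): composing a resolution $\rho$ of $X$ with $\pi^\defparam$ gives a resolution of $\cM_C(\bG,\bN)$, and because $\pi^\defparam$ is a Poisson isomorphism over a dense open set the two symplectic forms agree as rational forms, so the extension already obtained over $X$ furnishes the required extension for $\cM_C(\bG,\bN)$.

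The main obstacle is the codimension estimate of the second paragraph, namely confining the singular locus of $X$ to lie over $\ft^{(4)}\sslash W$; this is where the \'etale-local description of Theorem \ref{prop: fibers via centralizers and fixed} and the flatness of $\varpi^\defparam$ do the real work, and where the threshold $\ft^{(4)}$ (rather than $\ft^{(2)}$) in the hypothesis is essential, matching the $p=2$ case of Flenner's bound. By contrast, the reduction to Theorem \ref{criterion: normal symplectic} and the descent along the birational Poisson map $\pi^\defparam$ are comparatively formal.
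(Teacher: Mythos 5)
Your proposal is correct and takes essentially the same route as the paper: Theorem \ref{criterion: normal symplectic} for normality and symplecticity of the smooth locus, the \'etale-local smoothness from Theorem \ref{prop: fibers via centralizers and fixed} combined with the faithful flatness of $\varpi^\defparam$ and \cite[Cor.~6.1.4]{EGAIV2} for the codimension-$4$ bound on the singular locus, Flenner's extension theorem for symplectic singularities, and \cite{Namikawa} for terminality. The only difference is at the final descent step, where the paper simply invokes \cite[Lemma 6.12]{BS} for the proper birational Poisson map $\pi^\defparam$, while you reprove that lemma inline by composing a resolution of $\cM_C^\defparam(\bG,\bN)$ with $\pi^\defparam$ and matching the rational $2$-forms; this is a valid, self-contained substitute for the citation.
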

\begin{proof}
Under these assumptions Theorem \ref{criterion: normal symplectic} implies that $\cM_C^\defparam(\bG,\bN)$ is normal, and symplectic on its smooth locus.  Moreover, Corollary \ref{cor: etale smooth} and Proposition \ref{Prop: Poisson maps} imply that the preimage 
$$
U = (\varpi^\defparam)^{-1}\big((\ft \setminus \ft^{(4)})\sslash W\big)\subset \cM^\defparam_C(\bG, \bN)
$$ 
is smooth and symplectic.  If $\ft^{(4)} = \emptyset$ then $U = \cM_C^\defparam(\bG,\bN)$ is itself smooth and symplectic.

Otherwise $\operatorname{codim} \ft^{(4)} = 4$. As in the proof of the previous theorem, we conclude that the complement of $U$ has codimension $4$.  Therefore the singular locus of $\cM^\defparam_C(\bG, \bN)$ has codimension $\geq 4$. By a result of Flenner \cite{Flenner}, it follows that for any resolution of singularities $\pi:Y\rightarrow \cM_C^\defparam(\bG,\bN)$ and any 2-form $\omega$ on the smooth locus $\cM_C^\defparam(\bG,\bN)^{reg}$, the pull-back $\pi^\ast \omega$ extends to a regular 2-form on $Y$.  In particular this applies to $\omega = \omega^{reg}$ the symplectic form on $\cM_C^\defparam(\bG,\bN)^{reg}$, which shows that $\cM_C^\defparam(\bG,\bN)$ has symplectic singularities.

Since $\cM_C^\defparam(\bG,\bN)$ has symplectic singularities and its singular locus has codimension $\geq 4$,  it has terminal singularities by \cite[Cor.~1]{Namikawa}.  

In either case,  since $\pi^\defparam: \cM^\defparam_C(\bG,\bN)\rightarrow \cM_C(\bG,\bN)$ is a proper birational Poisson map, we conclude that $\cM_C(\bG, \bN)$ has symplectic singularities by \cite[Lemma 6.12]{BS}.
\end{proof}


\section{Quiver gauge theories}
\subsection{Definition}
\label{sec: QGT}
Let $Q = (I, E)$ be a quiver with vertices $I$ and arrows $E$.  For $e\in E$ we denote by $s(e), t(e) \in I$ its source and target .  Let $\bv, \bw \in \ZZ_{\geq 0}^I$ be two dimension vectors, to which we associate the vector spaces $V_i = \CC^{\bv_i}, W_i = \CC^{\bw_i}$ for each $i\in I$.  To the datum $(Q, \bv, \bw)$ we associate the pair
\begin{equation}
\label{groups from quiver data}
\bG = \prod_{i\in I} \GL(\bv_i), \quad \bN = \bigoplus_{e\in E} \Hom( V_{s(e)}, V_{t(e)} ) \oplus \bigoplus_{i \in I} \Hom( W_i, V_i)
\end{equation}
The group $\bG$ naturally acts on $\bN$, and we call $(\bG, \bN)$ a {\bf quiver gauge theory datum of type $Q$}, even if some $\bv_i$ or $\bw_i$ are zero.  We will always assume that $Q$ is {\bf simple}: that its underlying graph has no loops or multiple edges.  We denote the corresponding Coulomb branch by
\begin{equation*}
\cM_C(Q, \bv, \bw) \DEF \cM_C(\bG, \bN)
\end{equation*}

\begin{Remark}
\label{remark: no summand}
If for some $i\in I$ we have $\bw_i \neq 0$ but $\bv_i = 0$, then the corresponding summand $\Hom(W_i,V_i) = 0$ in (\ref{groups from quiver data}). Note that in this case we obtain the same pair $(\bG, \bN)$ by setting $\bw_i = 0$.
\end{Remark}

\begin{Remark}
Up to isomorphism, $\cM_C(Q,\bv,\bw)$ does not depend on the orientation of the quiver $Q$, see \cite[\S 6(viii)]{BFN1}.
\end{Remark}

\begin{Remark}
It is convenient to encode the datum $(Q, \bv, \bw)$ diagrammatically.  For example, if we let $Q$ be the oriented $A_3$ quiver $1\rightarrow 2 \leftarrow 3$ with dimension vectors $\bv = (3,1,2)$,  $\bw = (4,0,1)$, we get the diagram
$$
\begin{tikzcd}[sep=small,cells={nodes={draw=black, ellipse,anchor=center,minimum height=2em}}]
|[draw=black, rectangle]| 4 \ar[d] & & |[draw=black, rectangle]| 1 \ar[d] \\
3 \ar[r] & 1 & 2 \ar[l]
\end{tikzcd}
$$
Circled nodes encode the dimensions $\bv_i$, and boxed nodes the $\bw_i$.  We often omit nodes where $\bv_i=0$ or $\bw_i = 0$. 
In particular, an isolated node $\begin{tikzcd}[sep=small,cells={nodes={draw=black, ellipse,anchor=center,minimum height=2em}}]
n
\end{tikzcd} $ corresponds to the ``pure'' theory $(\bG, \bN) = (\GL(n), 0)$.
\end{Remark}

\begin{Remark}
The corresponding Higgs branch $\cM_H(Q, \bv, \bw)$ is a Nakajima quiver variety, as first defined in \cite{Nak3}: it is the Hamiltonian reduction $T^\ast \bN \ssslash \bG$.  These varieties have symplectic singularities \cite[Theorem 1.2]{BS}, and often have symplectic resolutions.
\end{Remark}

\subsubsection{Flavour symmetry}
Given $(Q, \bv, \bw)$ as above, we consider the flavour symmetry group 
\begin{equation}
\label{eq: flavour symmetry group}
\bF  = \bF(Q, \bv, \bw) \DEF \prod_{\substack{e \in E, \\ \bv_{s(e)}\bv_{t(e)} \neq 0}} \CC^\times \times \prod_{\substack{i\in I,\\ \bw_i \bv_i \neq 0}} ( \CC^\times)^{\bw_i}, 
\end{equation}
We think of this group as consisting of a dilating $\CC^\times$ factor for each arrow $e \in E$, along with the diagonal maximal torus $ (\CC^\times)^{\bw_i} \subset \GL(\bw_i)$ for each $i\in I$\footnote{We have elected to drop all factors where  $\Hom(V_{s(e)}, V_{t(e)})= 0$ or $\Hom(W_i, V_i) = 0$, respectively.}.  Thus the group $\bF$ acts on $\bN$ as follows: for $e\in E$ the factor of $\CC^\times$ scales $\Hom(V_{s(e)}, V_{t(e)})$ with weight 1, while for $i\in I$ the torus $(\CC^\times)^{\bw_i}$ acts on $W_i = \CC^{\bw_i}$ in the natural way.   

The actions of $\bG$ and $\bF$ commute, and we set 
\begin{equation*}
\widetilde{\bG} = \bG \times \bF
\end{equation*}
with its natural action on $\bN$.  

The center $Z(\bG) \cong \prod_{i \in I} \CC^\times$ consists of tuples of scalar matrices, and an element $z = (z_i)_{i\in I} \in \prod_{i \in I} \CC^\times$ acts on $\Hom(V_{s(e)}, V_{t(e)})$ by $z_{t(e)} z_{s(e)}^{-1}$ and on $\Hom(W_i, V_i)$ by $z_i$.  This is equal to the action of the element $\big( (z_{t(e)} z_{s(e)}^{-1}),  (z_i^{-1},\ldots,z_i^{-1}) \big) \in \bF$, and this correspondence defines a homomorphism $\phi: Z(\bG) \rightarrow \bF$ such that the action of $Z(\bG)$ on $\bN$ factors through $\phi$.   Via this homomorphism $\phi$, cocharacters of $Z(\bG)$ give cocharacters of $\bF$.  

\begin{Lemma}
Let $\defparam, \defparam' \in \cochar$.  If $\defparam - \defparam' = \phi(\rho)$ for a cocharacter $\rho$ of $Z(\bG)$, then
$$
\cM_C^\defparam(Q, \bv, \bw) \cong  \cM_C^{\defparam'}(Q, \bv, \bw) 
$$
\end{Lemma}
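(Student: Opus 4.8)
The plan is to construct a homogeneous \emph{invertible} element of degree $\phi(\rho)$ in the Coulomb branch algebra, and to use multiplication by its powers to identify the two graded rings whose $\Proj$ give $\cM_C^\defparam$ and $\cM_C^{\defparam'}$. Write $A = H_\ast^{\bG_\cO}(\cR_{\widetilde{\bG},\bN})$ for the commutative ring with its $\cochar$-grading (\ref{eq: grading by coweights}), so that $A_\nu = H_\ast^{\bG_\cO}(\cR_{\widetilde{\bG},\bN}(\nu))$, the multiplication satisfies $A_\nu\cdot A_{\nu'}\subseteq A_{\nu+\nu'}$, and $\cM_C^\defparam(\bG,\bN) = \Proj\bigoplus_{n\geq 0}A_{n\defparam}$ (and similarly for $\defparam'$). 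The crucial point is that, since the action of $Z(\bG)$ on $\bN$ factors through $\phi$, the central cocharacter
$$\sigma \DEF (-\rho,\,\phi(\rho)) \in X_\ast(\bG)\times X_\ast(\bF)=X_\ast(\widetilde{\bG})$$
acts trivially on $\bN$ and maps to $\phi(\rho)\in\cochar$ under $\widetilde{\bG}\to\bF$ (here $\rho$ is the given cocharacter of $Z(\bG)$).

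Because $\sigma$ acts trivially on $\bN$, the loop $z^\sigma$ fixes $\bN_\cK$ pointwise, so left multiplication by $z^\sigma\in\widetilde{\bG}_\cK$ preserves the regularity condition defining $\cR_{\widetilde{\bG},\bN}$; and because $z^{-\rho}$ is central in $\bG_\cK$ while $z^{\phi(\rho)}\in\bF_\cK$ commutes with $\bG_\cO$, this operation is $\bG_\cO$-equivariant. As $\sigma$ has image $\phi(\rho)$ in $X_\ast(\bF)$, it shifts the grading given by $\pi$, yielding $\bG_\cO$-equivariant isomorphisms $\cR_{\widetilde{\bG},\bN}(\nu)\xrightarrow{\sim}\cR_{\widetilde{\bG},\bN}(\nu+\phi(\rho))$. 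I would then set $\sfu\in A_{\phi(\rho)}$ to be the image of the fundamental class of $\cR_{\widetilde{\bG},\bN}(0)=\cR_{\bG,\bN}$, and let $\mathsf{v}\in A_{-\phi(\rho)}$ be the analogous class built from $z^{-\sigma}$. The heart of the matter is to show $\sfu\,\mathsf{v}=1$, so that $\sfu$ is a unit. For this I would invoke the injective abelianization map from the proof of Lemma \ref{cor: integral}, which realizes the domain $A$ inside $\CC[\ft\times\widetilde{\bT}^\vee]\otimes_{\CC[\ft]}\CC(\ft)$: there $\sfu$ and $\mathsf{v}$ become the monomials $e^{\pm\sigma}$ of the group algebra $\CC[X_\ast(\widetilde{\bT})]$, with no Euler-class corrections precisely because every weight of $\bN$ pairs trivially with $\sigma$; since $e^{\sigma}e^{-\sigma}=1$ and the map is injective, $\sfu\,\mathsf{v}=1$ in $A$.

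Granting that $\sfu\in A_{\phi(\rho)}$ is invertible, the conclusion is formal. Using $n\defparam = n\defparam'+n\phi(\rho)$ and commutativity of $A$, multiplication $a\mapsto\sfu^{\,n}a$ defines bijections $A_{n\defparam'}\xrightarrow{\sim}A_{n\defparam}$ which assemble into an isomorphism of graded rings $\bigoplus_{n\geq0}A_{n\defparam'}\xrightarrow{\sim}\bigoplus_{n\geq0}A_{n\defparam}$, since $(\sfu^{\,m}a)(\sfu^{\,n}b)=\sfu^{\,m+n}ab$. Taking $\Proj$ gives $\cM_C^{\defparam'}(\bG,\bN)\cong\cM_C^\defparam(\bG,\bN)$, which is the asserted isomorphism of $\cM_C^{\defparam'}(Q,\bv,\bw)$ with $\cM_C^\defparam(Q,\bv,\bw)$. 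I expect the only genuine obstacle to be the invertibility of $\sfu$; in the language of Remark \ref{rem: hamiltonian reduction} this says that moving the $\bF^\vee$-stability parameter by $\phi(\rho)$ twists the linearization only by a trivializable line bundle, which is exactly what the abelianization computation makes explicit.
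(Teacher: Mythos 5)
Your proof is correct, but it takes a genuinely different route from the paper. The paper never touches the homology ring: it reduces to a rank-one flavour torus by introducing the auxiliary group $\widetilde{\bG}_\defparam = \bG\times\CC^\times$ acting through $(g,s)\mapsto(g,\defparam(s))$, observes $\cM_C^1(\widetilde{\bG}_\defparam,\bN)\cong\cM_C^\defparam(Q,\bv,\bw)$ at the level of the moduli spaces and the convolution diagram, and then, when $\defparam=\defparam'+\phi(\rho)$, exhibits the group automorphism $\widetilde{\bG}_{\defparam}\cong\widetilde{\bG}_{\defparam'}$, $(g,s)\mapsto(\rho(s)^{-1}g,s)$, which intertwines the two actions on $\bN$ and is the identity on the flavour $\CC^\times$ --- a purely group-theoretic ``change of lift'' argument. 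You instead stay inside the $\cochar$-graded ring $A=H_\ast^{\bG_\cO}(\cR_{\widetilde{\bG},\bN})$ and build a homogeneous unit $\sfu\in A_{\phi(\rho)}$ from the central loop $z^\sigma$, $\sigma=(-\rho,\phi(\rho))$, then identify the two $\ZZ_{\geq 0}$-graded rings by $a\mapsto\sfu^n a$ before taking $\Proj$. Note that both proofs hinge on exactly the same mechanism --- $\sigma$ acts trivially on $\bN$ because the $Z(\bG)$-action factors through $\phi$ --- which in the paper appears as the intertwining property of the automorphism, and in your argument as the absence of Euler-class corrections. The paper's route is softer and avoids justifying any Borel--Moore classes; yours costs a little more (you must know that $[z^\sigma\bN_\cO]$ is a legitimate class whose abelianized image is the monomial $e^\sigma$, or alternatively verify $\sfu\cdot\mathsf{v}=[\bN_\cO]=1$ directly by the transversality of the convolution, so a citation to the relevant BFN localization computations is needed to make this airtight), but it buys something concrete: it locates the equivalence as an explicit unit in the flavoured Coulomb branch algebra, makes the graded isomorphism visibly $\CC[\ft]^W$-linear and compatible with the projections $\pi^\defparam,\pi^{\defparam'}$ to $\cM_C(\bG,\bN)$, and cleanly matches the GIT picture of Remark \ref{rem: hamiltonian reduction}, where shifting the linearization by $\phi(\rho)$ twists by a line bundle trivialized by $\sfu$.
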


\begin{proof}
Define $\widetilde{\bG}_\defparam = \bG \times \CC^\times$, which acts on $\bN$ via the map $\widetilde{\bG}_\defparam \rightarrow \widetilde{\bG}$ defined by $(g, s) \mapsto (g, \defparam(s))$. At the level of $\CC$-points it is straightforward to see that $\cR_{\widetilde{\bG}_\defparam, \bN}(n) \cong \cR_{\widetilde{\bG}, \bN}(n \defparam)$, where $n\in \ZZ$ is thought of as a cocharacter of the flavour symmetry group $\CC^\times$ of $\widetilde{\bG}_\defparam$.  This extends to the diagram defining the convolution product \cite[\S 3(i)]{BFN1}, and so there is an isomorphism of varieties
$$
 \cM_C^1( \widetilde{\bG}_\defparam, \bN)  \cong \cM_C^\defparam(Q, \bv, \bw) 
$$

Now assume that $\defparam = \defparam' + \phi(\rho)$.  Then there is an isomorphism $\widetilde{\bG}_{\defparam} \cong \widetilde{\bG}_{\defparam'}$ defined by $(g, s) \mapsto (\rho(s)^{-1} g, s)$. It intertwines their actions on $\bN$, and induces the identity map on their flavour symmetry groups.  Thus $\cM_C^1( \widetilde{\bG}_\defparam, \bN) \cong \cM_C^1( \widetilde{\bG}_{\defparam'}, \bN)$. 

Altogether, we get a chain of isomorphisms which proves the claim:
$$
\cM_C^\defparam(Q, \bv,\bw) \cong  \cM_C^1( \widetilde{\bG}_\defparam, \bN) \cong  \cM_C^1( \widetilde{\bG}_{\defparam'}, \bN) \cong \cM_C^{\defparam'}(Q, \bv,\bw)
$$
\end{proof}

We will call $\defparam, \defparam'$ {\bf equivalent} if $\defparam - \defparam' = \phi(\rho)$ as in the above lemma.  It will be useful to write this notion out more explicitly. Denote the components of our coweight $\varkappa$ of $\bF$ by
\begin{equation*}
\defparam = \big( (\defparam_e)_{e\in E}, (\defparam_{i,1},\ldots, \defparam_{i, \bw_i})_{i\in I} \big),
\end{equation*}
and those of the coweight $\rho$ of $Z(\bG) \cong \prod_i \CC^\times$ by $\rho = (\rho_i)_{i\in I} \in \ZZ^I$.  From the definition of the homomorphism $\phi$, we see that $\defparam$ is equivalent to 
\begin{equation}
\label{eq: equivalence}
\defparam' = \defparam- \phi(\rho) = \big( (\defparam_e-\rho_{t(e)} + \rho_{s(e)})_{e\in E}, (\defparam_{i,1} +\rho_i,\ldots, \defparam_{i, \bw_i}+\rho_i)_{i\in I}\big)
\end{equation}

\subsection{Examples}
\label{examples}
We now recall some important examples of the varieties $\cM_C(Q, \bv, \bw)$, which have been studied by Braverman-Finkelberg-Nakajima and Nakajima-Takayama \cite{BFN2, NakTak,BFN4}.

\subsubsection{Finite ADE types}
\label{example 1}
If $Q$ is an orientation of a Dynkin diagram of finite ADE type, then 
$$\cM_C(Q, \bv, \bw) \cong \overline{\cW}^{\lambda^\ast}_{\mu^\ast}$$
is a generalized affine Grassmannian slice for the corresponding adjoint group $G_Q$ of ADE type \cite[Theorem 3.10]{BFN2}\footnote{Following \cite{BFN2} we denote $\lambda^\ast = - w_0 \lambda$, where $w_0 \in W$ is the longest element.}.   Up to equivalence as in (\ref{eq: equivalence}), we may assume that $\defparam_e = 0$ for all $e\in E$.  List the $\defparam_{i,r}$ in decreasing order:
$$
k_1 \geq k_2 \geq \ldots \geq k_N, \qquad \text{ where } N = \sum_i \bw_i
$$
If $k_a = \defparam_{i,r}$ then we denote the corresponding fundamental coweight of $G_Q$ by $\lambda_a = \varpi_i$.  We get a tuple $\underline{\lambda} = (\lambda_1,\ldots,\lambda_N)$.  Thinking of $ (k_1,\ldots,k_n)$ as a cocharacter of $(\CC^\times)^N \cong \prod_i (\CC^\times)^{\bw_i}$, by construction it lies in the cone $\defparam \in \Lambda_F^{++}$ from \cite[\S 5(ii)]{BFN4}.  Thus by \cite[\S 5(v)]{BFN4}, if $k_1> \ldots > k_N$ are distinct (i.e.~if the $\varkappa_{i,r}$ are distinct), then
\begin{equation*}
\cM^\defparam_C(Q, \bv, \bw) \cong \widetilde{\cW}^{\underline{\lambda}^\ast}_{\mu^\ast}
\end{equation*}
where the right-hand side is defined as in \cite[\S 5(i)]{BFN4}.  If the $\lambda_a$ are all minuscule, then this variety is smooth and symplectic. 
This can be proven similarly to the main theorem of \cite{MuthiahWeekes}, cf.~also \cite[Theorem 2.9]{KWWY}.  

In particular, in finite type A, the variety $\cM_C^\defparam(Q, \bv, \bw)$ is smooth and symplectic so long as  $\defparam$ does not lie on a finite collection of hyperplanes.  This clearly remains true even if we do not impose that $\defparam_e = 0$.

\subsubsection{Affine type A}
\label{example 2}
In affine type A, $\cM_C(Q, \bv, \bw)$ is isomorphic to a bow variety by \cite[Theorem 6.18]{NakTak}.  Label the vertices by $I = \ZZ / n \ZZ$, and pick the orientation $Q$ where $i\rightarrow i+1$ for $0\leq i <n$.  Up to equivalence, we may assume that $\defparam_e = 0$ except for $e$ the arrow $n-1 \rightarrow 0$.  Then for $\defparam$ satisfying the inequality \cite[(4.4)]{BFN4}, the variety $\cM_C^\defparam(Q, \bv, \bw)$ is identified with a partially resolved bow variety. For generic $\defparam$ this variety is smooth and symplectic, see \cite[\S 6.2]{NakTak}.

\subsection{Hyperplane arrangement}
Our next task will be to understand the pairs $(Z_\bG(t), \bN^t)$ that can arise for quiver gauge theories,  where $t \in \ft$.  First some terminology: we call $(Q, \bv, \bw)$ {\bf connected} if $\{i \in I:\bv_i \neq 0\}$ is a connected subset of $Q$, and if $\bw_i \neq 0$ implies $\bv_i \neq 0$ for all $i\in I$.  \footnote{Note that the latter condition is very mild: if for some $i\in I$ we have  $\bw_i \neq 0$ but $\bv_i = 0$, then there is no corresponding summand $\Hom(W_i, V_i)$ in (\ref{groups from quiver data}).  Thus in this case we obtain the same $(\bG, \bN)$ by setting $\bw_i = 0$.}

For a given $(Q,\bv,\bw)$ and $t\in \ft$, we'll denote $\codim_{Q,\bv, \bw} (t) \DEF \codim_{\bG, \bN}(t)$ where the latter is defined as in (\ref{eq: codim function}).

\begin{Proposition}
\label{lemma: components}
Let $(Q, \bv, \bw)$ be fixed, with corresponding $(\bG,\bN)$. For any $t \in \ft$ there exists a decomposition
$$
\cM^\defparam_C(Z_\bG(t), \bN^t)\ \cong\ \cM^\defparam_C(Q,\bv^{(1)}, \bw^{(1)}) \times \cdots \times \cM^\defparam_C(Q, \bv^{(m)}, \bw^{(m)}),
$$
which holds for all $\defparam \in \cochar$, and where the data $(Q, \bv^{(\ell)}, \bw^{(\ell)})$ are all connected.  Moreover, 
\begin{enumerate}[(i)]
\item $\bv = \sum_\ell \bv^{(\ell)}$,
\item  for each $i \in I$ there is at most one $\ell$ with $\bw_i^{(\ell)} \neq 0$, and in this case $\bw_i^{(\ell)} = \bw_i$,
\item for each $\ell$, the restriction map on flavour symmetry groups
$$\bF(Q, \bv, \bw) \twoheadrightarrow \bF(Q, \bv^{(\ell)}, \bw^{(\ell)})$$ 
is the obvious surjection respecting the factors in (\ref{eq: flavour symmetry group}),
\item $ \codim_{Q,\bv, \bw} (t) = \sum_\ell \codim_{Q,\bv^{(\ell)}, \bw^{(\ell)}}(0)$
\end{enumerate}
\end{Proposition}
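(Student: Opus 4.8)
The plan is to make the pair $(Z_\bG(t), \bN^t)$ completely explicit by decomposing $t$ into eigenvalues, and then to obtain the product decomposition by repeatedly applying Lemma \ref{lemma: segre}. First I would write $t = (t^{(i)})_{i \in I} \in \ft$, where $t^{(i)}$ is the diagonal matrix at vertex $i$, and decompose each $V_i = \bigoplus_{c \in \CC} V_i^{(c)}$ into the $t^{(i)}$-eigenspaces, $c$ ranging over the distinct diagonal entries of $t^{(i)}$. Since $\bT$ is a maximal torus of $Z_\bG(t)$, the centralizer is the block-diagonal subgroup $Z_\bG(t) = \prod_{i,c} \GL(V_i^{(c)})$. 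To compute $\bN^t$ I would read off weights: a matrix entry of $\Hom(V_{s(e)}, V_{t(e)})$ from the $c'$-eigenspace of $V_{s(e)}$ to the $c$-eigenspace of $V_{t(e)}$ has weight $\eps^{(t(e))}_a - \eps^{(s(e))}_b$, which vanishes at $t$ exactly when $c = c'$; and a matrix entry of $\Hom(W_i, V_i)$ landing in the $c$-eigenspace of $V_i$ has weight $\eps^{(i)}_a$, which vanishes at $t$ exactly when $c = 0$. Hence
$$
\bN^t \;\cong\; \bigoplus_{c} \Big( \bigoplus_{e \in E} \Hom\big(V^{(c)}_{s(e)}, V^{(c)}_{t(e)}\big) \Big) \;\oplus\; \bigoplus_{i \in I} \Hom\big(W_i, V^{(0)}_i\big),
$$
so that framing survives only in the $c = 0$ block.

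This exhibits $(Z_\bG(t), \bN^t)$ as a product, over the eigenvalues $c$, of quiver gauge theory data of type $Q$: for each $c$ the factor is $(Q, \bv^{(c)}, \bw^{(c)})$ with $\bv^{(c)}_i = \dim V^{(c)}_i$, and with $\bw^{(c)} = \bw$ if $c = 0$ and $\bw^{(c)} = 0$ otherwise (using Remark \ref{remark: no summand} to discard any framing at a vertex where $\bv^{(c)}_i = 0$). I would then apply Lemma \ref{lemma: segre}: the flavour group $\bF = \bF(Q, \bv, \bw)$ commutes with $t$, hence preserves each block of $\bN^t$ and acts on it as a torus, and because $\widetilde{\bG} = \bG \times \bF$ we have $Z_{\widetilde{\bG}}(t) = Z_\bG(t) \times \bF$, which is exactly the diagonal-flavour subgroup of $\prod_c (Z^{(c)} \times \bF)$ required by the lemma (here $Z^{(c)} = \prod_i \GL(V_i^{(c)})$). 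This yields $\cM^\defparam_C(Z_\bG(t), \bN^t) \cong \prod_c \cM^\defparam_C(Q, \bv^{(c)}, \bw^{(c)})$ for all $\defparam$. Finally, each factor $(Q, \bv^{(c)}, \bw^{(c)})$ need not be connected, so I would partition its support $\{i : \bv^{(c)}_i \neq 0\}$ into connected components of $Q$ and apply Lemma \ref{lemma: segre} once more — there are no arrows between distinct components, so the datum factors accordingly — re-indexing the resulting connected pieces as $(Q, \bv^{(\ell)}, \bw^{(\ell)})$, $\ell = 1, \ldots, m$.

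Claims (i) and (ii) are then immediate from the construction: $\sum_c \bv^{(c)}_i = \bv_i$ since eigenspace dimensions sum to $\dim V_i$, and splitting into components only refines this partition; while framing is carried only by the $c = 0$ block, so a given $i$ lies in the support of at most one factor with nonzero framing, where it equals $\bw_i$. Claim (iv) follows by combining (\ref{eq: codim = codim}), which gives $\codim_{Q,\bv,\bw}(t) = \codim_{Z_\bG(t), \bN^t}(0)$, with additivity of the codimension function over a product of theories: the generalized roots of distinct factors lie in complementary coordinate subspaces of $\ft$, so the codimension of their common vanishing at $0$ is the sum of the factorwise codimensions. The step I expect to require the most care is claim (iii), namely verifying that $\bF(Q,\bv,\bw)$ acts on each factor through the asserted obvious surjection onto $\bF(Q, \bv^{(\ell)}, \bw^{(\ell)})$; this is what makes the Segre-product identification and the interpretation of the parameter $\defparam$ on each factor precise, and it must be tracked through both applications of Lemma \ref{lemma: segre}. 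Concretely, one checks factor by factor in (\ref{eq: flavour symmetry group}) that the dilating $\CC^\times$ attached to an arrow $e$ (resp.\ the torus $(\CC^\times)^{\bw_i}$ attached to a framing) acts on the corresponding summand of $\bN^{(\ell)}$ with the same weights as in $\bN$ whenever that summand is nonzero, and trivially otherwise, which is exactly the statement that the restriction is the obvious surjection respecting the factors.
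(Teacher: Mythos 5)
Your proposal is correct and follows essentially the same route as the paper's proof: decompose into $t$-eigenspaces to identify $(Z_\bG(t), \bN^t)$ explicitly (with framing surviving only in the $0$-eigenvalue block, discarded via Remark \ref{remark: no summand} where the gauge dimension vanishes), refine by connected components of the support, and apply Lemma \ref{lemma: segre} with the diagonal flavour torus to obtain the product of $\cM_C^\defparam$'s, deducing (i)--(iii) from the construction and (iv) from (\ref{eq: codim = codim}) plus additivity of the codimension over complementary coordinate subspaces of $\ft$. Your explicit weight computations and the factor-by-factor check in (iii) are, if anything, slightly more detailed than the paper's ``tracing through the construction.''
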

\begin{proof}
We may think of $t \in \operatorname{Lie}\bG = \bigoplus_i \End(V_i)$.  Denote the $\lambda$--eigenspace of $t$ on $V_i$ by $V_i(\lambda)$.  Then we can identify
\begin{align*}
Z_\bG(t) & = \prod_\lambda  \prod_i \GL(V_i(\lambda)), \\
\bN^t & = \Big(\bigoplus_\lambda \bigoplus_{e \in E} \Hom(V_{s(e)}(\lambda), V_{t(e)}(\lambda) )\Big) \oplus \bigoplus_{i \in I} \Hom( W_i, V_i(0))
\end{align*}
For each $\lambda$, consider the subgroup $\bG_\lambda \subset Z_\bG(t)$ corresponding to factors labelled by $\lambda$ above, and $\bN_\lambda \subset \bN^t$ corresponding to the summands labelled by $\lambda$. (In particular, the framing $W$ only contributes $\bN_\lambda$ for $\lambda = 0$.) Then $Z_\bG(t) = \prod_\lambda \bG_\lambda$ and $\bN^t = \bigoplus_\lambda \bN_\lambda$, with $\bG_\lambda$ acting trivially on $\bN_\mu$ unless $\lambda = \mu$. 

By construction each $(\bG_\lambda,\bN_\lambda)$ corresponds to a quiver gauge theory datum $(Q,\bv_\lambda,\bw_\lambda)$, but might not be connected.  So we will make a further refinement. First, if for some $\lambda, i$ we have $(\bw_\lambda)_i \neq 0$ and $(\bv_\lambda)_i = 0$, then we may set $(\bw_\lambda)_i = 0$ by Remark \ref{remark: no summand}.  (Note that this can only happen for $\lambda = 0$.) Next, for each connected component $X$ of $\{i \in I : (\bv_\lambda)_i \neq 0\}$ there is a corresponding $\bG_\lambda^X = \prod_{i\in X} \GL(V_i(\lambda)) \subset \bG_\lambda$ and $\bN_\lambda^X \subset \bN_\lambda$. Then $Z_\bG(t) = \prod_{\lambda, X} \bG_\lambda^X$ and $\bN^t = \bigoplus_{\lambda,X} \bN_\lambda^X$.  By construction, each $(\bG_\lambda^X, \bN_\lambda^X)$ corresponds to a connected quiver datum, as desired.

Only finitely many pairs $(\lambda, X)$ ever contribute non-trivially, so we may relabel the above decompositions simply by $Z_\bG(t) = \prod_{\ell=1}^m \bG^{(\ell)}$ and $\bN^t = \bigoplus_{\ell=1}^m \bN^{(\ell)}$. Each $(\bG^{(\ell)}, \bN^{(\ell)})$ corresponds to some connected quiver datum $(Q, \bv^{(\ell)}, \bw^{(\ell)})$.  Tracing through the construction, we see that the group $\bF$ respects these decompositions.  Therefore Lemma \ref{lemma: segre} applies, which shows that
$$
\cM^\defparam_C(Z_\bG(t), \bN^t)\ \cong\ \cM^\defparam_C(Q,\bv^{(1)}, \bw^{(1)}) \times \cdots \times \cM^\defparam_C(Q, \bv^{(m)}, \bw^{(m)})
$$
Parts (i), (ii), (iii) follow easily from the above construction.  Finally, the formula (iv) follows from  (\ref{eq: codim = codim}) together with our decomposition of $(Z_\bG(t), \bN^t)$.
\end{proof}

Next, we classify quivers with $\codim_{Q, \bv, \bw} (0) \leq 3$. 
\begin{Lemma}
\label{lemma: classification}
Let $(Q, \bv, \bw)$ be connected.  Then
$$
\codim_{Q,\bv, \bw}(0) = \left\{ \begin{array}{cl} \sum_i \bv_i, & \text{if some } \bw_i \neq 0 \\ \sum_i \bv_i - 1, & \text{if all } \bw_i = 0 \end{array} \right.
$$
In particular, Figure \ref{figure} gives a complete list of connected $(Q,\bv,\bw)$ with $\codim_{Q,\bv,\bw}(0) \leq 3$.
\end{Lemma}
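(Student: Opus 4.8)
The plan is to recast $\codim_{Q,\bv,\bw}(0)$ as the dimension of the $\CC$-span of all generalized roots inside $\ft^\ast$. Every generalized root is a linear functional on $\ft$ and so vanishes at $0$; hence the intersection in (\ref{eq: codim function}), evaluated at $t=0$, is the common zero locus $\bigcap_i \mathbb{V}(\varphi_i)$ of \emph{all} generalized roots. This is the annihilator of $L := \Span_\CC\{\varphi_1,\ldots,\varphi_r\}$, so its codimension in $\ft$ equals $\dim_\CC L$. Thus the first assertion reduces to computing $\dim_\CC L$.

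To carry this out I would use the decomposition $\ft = \bigoplus_{i} \ft_i$, with $\ft_i$ the standard Cartan of $\GL(\bv_i)$ with coordinates $t_{i,1},\ldots,t_{i,\bv_i}$, and split each $\ft_i^\ast$ into its trace-zero part and the line spanned by $\tau_i = \sum_a t_{i,a}$. Writing $I_+ = \{i : \bv_i\neq 0\}$, the generalized roots fall into three families (multiplicities being irrelevant to $L$): the roots $t_{i,a}-t_{i,b}$ of each $\mathfrak{gl}(\bv_i)$; the arrow weights $t_{t(e),b}-t_{s(e),a}$; and, when $\bw_i\neq 0$, the framing weights $t_{i,a}$. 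The first family spans precisely the direct sum $T_0$ of all trace-zero subspaces, of dimension $\sum_{i\in I_+}(\bv_i-1)$. The main step is then to compute the image of the remaining roots in the quotient $\ft^\ast/T_0 \cong \bigoplus_{i\in I_+}\CC\,\bar t_i$, where $\bar t_i$ denotes the common image of the $t_{i,a}$: an arrow between $i,j\in I_+$ contributes $\bar t_j-\bar t_i$, while a framing at $i$ contributes $\bar t_i$. Using connectedness of $(Q,\bv,\bw)$, the edge vectors $\bar t_j-\bar t_i$ span the codimension-one ``sum-zero'' hyperplane of $\bigoplus_{i\in I_+}\CC\,\bar t_i$, of dimension $|I_+|-1$; adjoining any framing vector $\bar t_i$, whose coordinate sum is nonzero, fills out the whole space, of dimension $|I_+|$. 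Combining with $\dim T_0$ gives $\sum_i \bv_i - 1$ when all $\bw_i=0$ and $\sum_i \bv_i$ when some $\bw_i\neq 0$, which is the claimed formula.

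I expect the delicate points to be bookkeeping rather than conceptual, and to hinge on using the connectedness hypothesis in exactly the stated form: both that $I_+$ induces a connected subgraph of $Q$ (so the edge vectors span the full sum-zero hyperplane — the standard incidence-map fact that for a connected graph the only functionals killing all $\bar t_j-\bar t_i$ are the constant ones), and that $\bw_i\neq 0\Rightarrow i\in I_+$ (so every framing weight is a genuine nonzero weight of $\bN$, the summand $\Hom(W_i,V_i)$ being nonzero). One should also note that arrow weights are always nonzero because $Q$ has no loops, so $s(e)\neq t(e)$.

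For the classification statement the formula reduces everything to a finite search: if some $\bw_i\neq 0$ then $\codim_{Q,\bv,\bw}(0)\leq 3$ forces $\sum_i\bv_i\leq 3$, while if all $\bw_i=0$ it forces $\sum_i\bv_i\leq 4$. Since $Q$ is simple and $(Q,\bv,\bw)$ is connected, I would then simply enumerate the finitely many connected simple quiver data with $\sum_i\bv_i$ in the relevant range, recording them as in Figure \ref{figure}; this is a routine, if slightly lengthy, case check presenting no further difficulty.
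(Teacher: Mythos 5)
Your proposal is correct and is essentially the paper's argument read in dual form: the paper solves the system $\varphi_i(x)=0$ directly, using the $\mathfrak{gl}(\bv_i)$ roots, the arrow weights, and connectedness to show the common zero locus is the scalar diagonal line (or $0$ once some $\bw_i \neq 0$), whereas you compute the rank of the span of the generalized roots in $\ft^\ast$ and pass to the annihilator — the same linear algebra, with the incidence-matrix rank fact playing the role of the paper's ``all components of $x$ are equal'' step. The reduction of the classification to the finite enumeration $\sum_i \bv_i \leq 3$ (framed) or $\leq 4$ (unframed) matches the paper exactly.
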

This dichotomy corresponds to whether $\bG$ acts on $\bN$ faithfully (some $\bw_i \neq 0$) or not (all $\bw_i = 0 $).
\begin{proof}
Let $x\in \ft$.  If all generalized roots vanish at $x$, then this forces all components of $x$ to be equal (with respect to the standard bases of the $V_i = \CC^{\bv_i}$).  If there is no framing then there is no further constraint, so the vanishing locus is 1-dimensional.  If there is framing, then this further forces $x=0$. This proves the formula for the codimension.

The classification in Figure \ref{figure} is now straightforward: if there is no framing then we demand $\sum_i \bv_i -1 \leq 3$, so in particular there are at most 4 vertices.  If there is framing, then we instead demand $\sum_i \bv_i \leq 3$, so there are at most 3 vertices.
\end{proof}

\begin{figure}
$$
\begin{tikzcd}[sep=small,cells={nodes={draw=black, ellipse,anchor=center,minimum height=2em}}]
|[draw=black, rectangle]| n \ar[d] \\
1
\end{tikzcd}
\qquad
\begin{tikzcd}[sep=small,cells={nodes={draw=black, ellipse,anchor=center,minimum height=2em}}]
|[draw=black, rectangle]| n \ar[d] \\
2
\end{tikzcd}
\qquad
\begin{tikzcd}[sep=small,cells={nodes={draw=black, ellipse,anchor=center,minimum height=2em}}]
|[draw=black, rectangle]| n \ar[d] \\
3
\end{tikzcd}
\qquad
\begin{tikzcd}[cells={nodes={draw=black, ellipse,anchor=center,minimum height=2em}}]
4
\end{tikzcd}
\qquad
\begin{tikzcd}[sep=small,cells={nodes={draw=black, ellipse,anchor=center,minimum height=2em}}]
|[draw=black, rectangle]| n \ar[d] & |[draw=black, rectangle]| m \ar[d] \\
1 \ar[r] & 1
\end{tikzcd}
\qquad
\begin{tikzcd}[sep=small,cells={nodes={draw=black, ellipse,anchor=center,minimum height=2em}}]
|[draw=black, rectangle]| n \ar[d] & |[draw=black, rectangle]| m \ar[d] \\
2 \ar[r] & 1 
\end{tikzcd}
\qquad
\begin{tikzcd}[sep=small,cells={nodes={draw=black, ellipse,anchor=center,minimum height=2em}}]
2 \ar[r] & 2
\end{tikzcd}
$$
\smallskip
$$
\begin{tikzcd}[sep=small,cells={nodes={draw=black, ellipse,anchor=center,minimum height=2em}}]
3 \ar[r] & 1
\end{tikzcd}
\qquad
\begin{tikzcd}[sep=small,cells={nodes={draw=black, ellipse,anchor=center,minimum height=2em}}]
 |[draw=black, rectangle]| n \ar[d] & |[draw=black, rectangle]| m \ar[d]  & |[draw=black, rectangle]| p \ar[d] \\
1 \ar[r]& 1 \ar[r] & 1
\end{tikzcd}
\qquad
\begin{tikzcd}[sep=tiny,cells={nodes={draw=black, ellipse,anchor=center,minimum height=2em}}]
& &  |[draw=black, rectangle]|n \ar[dd]& &\\
& & & &\\
& &1  \ar[dr]& &\\
& 1 \ar[ur] & & 1 \ar[ll]&\\
|[draw=black, rectangle]|m \ar[ur]  & & & & |[draw=black, rectangle]|p \ar[ul]
\end{tikzcd}
\begin{tikzcd}[sep=small,cells={nodes={draw=black, ellipse,anchor=center,minimum height=2em}}]
2 \ar[r] & 1 \ar[r] & 1
\end{tikzcd}
$$
\smallskip
$$
\begin{tikzcd}[sep=small,cells={nodes={draw=black, ellipse,anchor=center,minimum height=2em}}]
1 \ar[r] & 2 \ar[r] & 1 
\end{tikzcd}
\qquad
\begin{tikzcd}[sep=tiny,cells={nodes={draw=black, ellipse,anchor=center,minimum height=2em}}]
&2  \ar[dr]& \\
1 \ar[ur] & & 1  \ar[ll]
\end{tikzcd}
\qquad
\begin{tikzcd}[sep=small,cells={nodes={draw=black, ellipse,anchor=center,minimum height=2em}}]
1 \ar[r]& 1 \ar[r] & 1 \ar[r]  & 1
\end{tikzcd}
$$
\medskip
$$
\begin{tikzcd}[sep=tiny,cells={nodes={draw=black, ellipse,anchor=center,minimum height=2em}}]
& 1 \ar[dd]&\\
& & \\
&1  \ar[dr]& \\
1 \ar[ur] & & 1
\end{tikzcd}
\qquad
\begin{tikzcd}[sep=tiny,cells={nodes={draw=black, ellipse,anchor=center,minimum height=2em}}]
& 1 \ar[dd]&\\
& & \\
&1  \ar[dr]& \\
1 \ar[ur] & & 1 \ar[ll]
\end{tikzcd}
\qquad
\begin{tikzcd}[sep=small,cells={nodes={draw=black, ellipse,anchor=center,minimum height=2em}}]
1 \ar[r] \ar[d] & 1 \ar[d] \\
1 \ar[r] & 1 
\end{tikzcd}
\qquad
\begin{tikzcd}[sep=small,cells={nodes={draw=black, ellipse,anchor=center,minimum height=2em}}]
1 \ar[r] \ar[d] \ar[dr] & 1 \ar[d] \\
1 \ar[r] & 1 
\end{tikzcd}
\qquad
\begin{tikzcd}[sep=small,cells={nodes={draw=black, ellipse,anchor=center,minimum height=2em}}]
1 \ar[r]  \ar[d]\ar[dr] & 1 \ar[d] \\
1 \ar[r] \arrow[ur, crossing over] & 1 
\end{tikzcd}
$$
\caption{Up to orientation, the list of all connected quiver gauge theory data with $\codim_{Q,\bv, \bw}(0) \leq 3$.    The framing dimensions $n,m,p \geq 0$.  Note that we consider only simple quivers.}
\label{figure}
\end{figure}
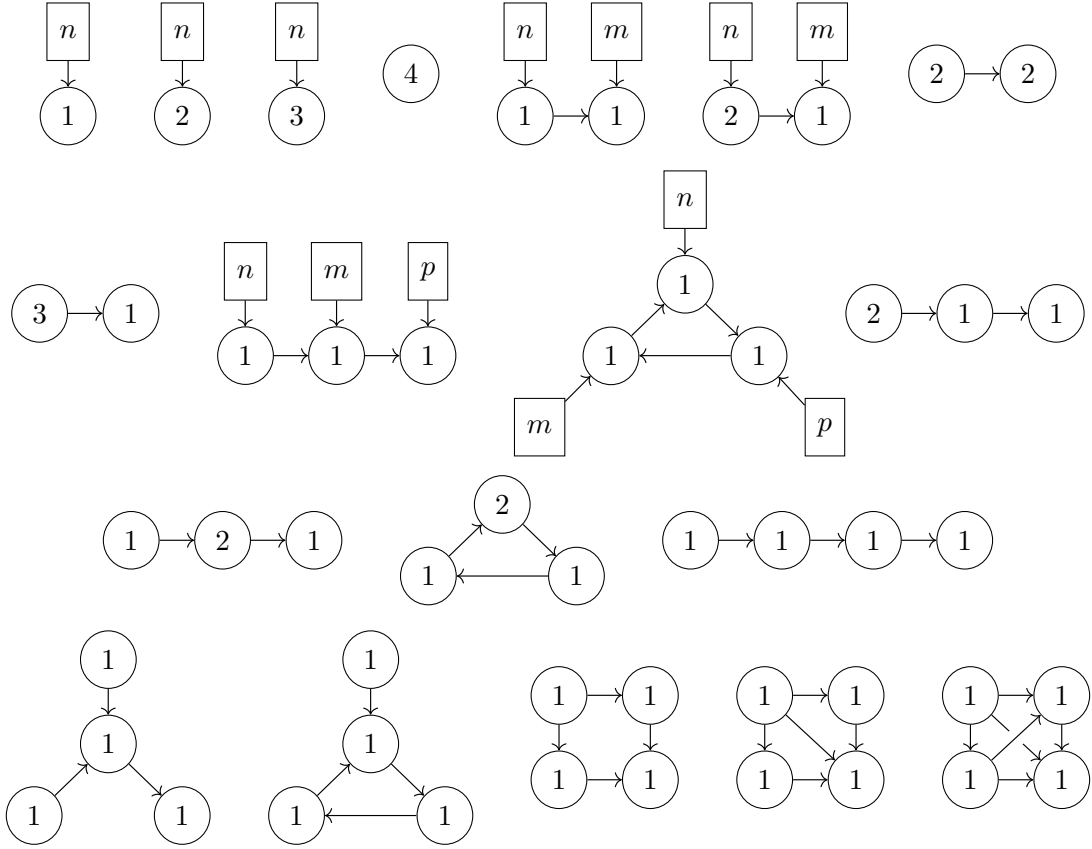

\subsection{The proof of Theorem \ref{main thm}}
\label{section: proof}

We will apply Theorem \ref{criterion}.   Recall that $t \in \ft \setminus \ft^{(4)}$ means that $\codim_{Q,\bv,\bw}(t) \leq 3$.  By combining Proposition \ref{lemma: components} and Lemma \ref{lemma: classification},  we see that for $t \in \ft \setminus \ft^{(4)}$
\begin{equation}
\label{eq: decomp main proof}
\cM^\defparam_C(Z_\bG(t), \bN^t) \cong \cM^\defparam_C(Q,\bv^{(1)}, \bw^{(1)}) \times \cdots \times \cM^\defparam_C(Q, \bv^{(m)}, \bw^{(m)})
\end{equation}
where all factors on the right hand side are from Figure \ref{figure}, and where 
$$
\codim_{Q,\bv,\bw}(t) = \sum_\ell \codim_{Q, \bv^{(\ell)}, \bw^{(\ell)}}(0) \leq 3
$$
We claim that any particular factor $\cM_C^\defparam(Q, \bv^{(\ell)}, \bw^{(\ell)})$ above is smooth so long as $\defparam$ avoids a finite set of hyperplanes.  Assuming this claim for the moment, this implies that $\cM^\defparam_C(Z_\bG(t), \bN^t)$ is smooth so long as $\defparam$ avoids the (still finite) union over $\ell$ of these sets of  hyperplanes.  Note also that as we vary $t\in \ft \setminus \ft^{(4)}$, only finitely many decompositions (\ref{eq: decomp main proof}) can arise.   Therefore for a generic choice of $\defparam$, the space $\cM_C^\defparam(Z_\bG(t), \bN^t)_t$ is smooth for all $t\in\ft\setminus \ft^{(4)}$.  So $\cM_C(Q, \bv, \bw)$ has symplectic singularities by  Theorem \ref{criterion}.

To prove the claim, by part (iii) of Proposition \ref{lemma: components} we can reduce to the case where $(Q, \bv, \bw)$ is one of the quivers in Figure \ref{figure}, with the intrinsic flavour symmetry group $\bF = \bF(Q, \bv, \bw)$.

Note that many of the quivers in Figure \ref{figure} have all $\bv_i =1$, and so correspond to toric hyperk\"ahler varieties \cite[\S 4(vii)]{BFN1}. The flavour symmetry $\bF$ for the quiver (defined as in (\ref{eq: flavour symmetry group})) surjects onto the full Abelian flavour symmetry group: $\bF$ contains subtori which dilate each edge independently, with weight 1.  Thus we can appeal to the unimodularity criterion for existence of resolutions given in \cite[Proposition 4.11]{BellamyKuwabara}.  This is easy to verify, and proves the claim in these cases.

For the remaining quivers in Figure \ref{figure} where some $\bv_i \geq 2$, all but one (see below) are of finite type A, so the claim in these cases follows from the results of \cite[\S 5]{BFN4} discussed in \S \ref{example 1}.  This leaves one quiver which is of affine type A, as discussed in \S \ref{example 2}.  Thus the claim holds in all cases, which completes the proof of the theorem.
%

\begin{Remark}
\label{remark: qfactterm}
We also conclude from Theorem \ref{criterion} that $\cM_C^\defparam(Q, \bv, \bw)$ has terminal singularities, for generic $\defparam$.  We optimistically conjecture that it is a $\QQ$--factorial terminalization of $\cM_C(Q,\bv,\bw)$.
\end{Remark}

\begin{Remark}
\label{rem: quiver manipulations}
The affine type A quiver discussed in the proof above is also isomorphic to a product of finite type A cases:
\begin{equation*}
\label{eq: isom of quiver data}
\begin{tikzcd}[sep=tiny,cells={nodes={draw=black, ellipse,anchor=center,minimum height=2em}}]
&2  \ar[dr]& \\
1 \ar[ur] & & 1  \ar[ll]
\end{tikzcd}
\quad 
\cong 
\quad
\begin{tikzcd}[sep=tiny,cells={nodes={draw=black, ellipse,anchor=center,minimum height=2em}}]
|[draw=black, rectangle]| 1  & |[draw=black, rectangle]| 1 \ar[d] & \\
2 \ar[u] & 1 \ar[l] & 1 
\end{tikzcd}
\end{equation*}
More precisely, the left side has $\bG = \GL(2) \times \CC^\times \times \CC^\times$ with factors corresponding to the vertices in clockwise order.  Consider the automorphism of $\bG$ defined by $(g, s, t) \mapsto (g s^{-1} t, s^{-1} t, t)$. The pull-back of the left side quiver datum under this isomorphism is the right side quiver datum. This map also induces an isomorphism on flavour symmetry. 
\end{Remark}

Using a similar argument, we also obtain the following generalization of Theorem \ref{thm: BFN theorem}:
\begin{Theorem}
\label{thm: BFN theorem for resolutions}
For any $\defparam \in \cochar$, the variety $\cM_C^\defparam(Q,\bv,\bw)$ is irreducible, normal, and its Poisson structure is symplectic on its smooth locus.
\end{Theorem}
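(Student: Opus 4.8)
Irreducibility is already in hand: $\cM_C^\defparam(Q,\bv,\bw)=\cM_C^\defparam(\bG,\bN)$ is an integral scheme by Lemma~\ref{cor: integral}. It remains to establish normality together with the symplectic-on-the-smooth-locus property, and for this the plan is to invoke Theorem~\ref{criterion: normal symplectic} rather than the stronger Theorem~\ref{criterion}. That criterion reduces the statement, for the fixed $\defparam$, to checking that $\cM_C^\defparam(Z_\bG(t),\bN^t)_t$ is normal and symplectic on its smooth locus for every $t\in\ft\setminus\ft^{(2)}$, i.e.~for every $t$ with $\codim_{Q,\bv,\bw}(t)\le 1$. The key structural difference from the proof of Theorem~\ref{main thm} is twofold: we now only need to control codimension $\le 1$ (rather than $\le 3$), and we need only the weaker conclusion ``normal and symplectic on the smooth locus'' (rather than ``smooth and symplectic'')---but crucially we must establish it for \emph{all} $\defparam$, not merely for generic ones.

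The reduction step proceeds exactly as in the main proof. By Proposition~\ref{lemma: components}, for such $t$ there is a product decomposition $\cM_C^\defparam(Z_\bG(t),\bN^t)\cong\prod_\ell\cM_C^\defparam(Q,\bv^{(\ell)},\bw^{(\ell)})$ into connected quiver data with $\sum_\ell\codim_{Q,\bv^{(\ell)},\bw^{(\ell)}}(0)=\codim_{Q,\bv,\bw}(t)\le 1$, the flavour group restricting compatibly to each factor by part (iii). Since a product of normal varieties is normal, and the smooth locus of a product is the product of the smooth loci---on which the product Poisson structure is again symplectic---both properties are inherited by products; the same holds after the localization $(\,\cdot\,)_t$, which is a base change along $\CC[\ft]^{W_t}\to\CC[\ft]^{W_t}_t$ and hence preserves these local conditions. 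Thus it suffices to treat each connected factor. By Lemma~\ref{lemma: classification}, the connected data with $\codim_{Q,\bv,\bw}(0)\le 1$ are precisely the pure theory $(\GL(1),0)$ (codimension $0$) and, in codimension $1$, the pure theory $(\GL(2),0)$ together with the abelian theories $(\GL(1),\CC^{\,n})$ and $(\GL(1)\times\GL(1),\CC)$; all of these occur among the entries of Figure~\ref{figure} (the framed families evaluated at small dimensions).

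Finally I would dispose of these four cases directly. For the two pure theories $(\GL(1),0)$ and $(\GL(2),0)$ the flavour symmetry group is trivial, so the restriction of $\defparam$ is $0$ and $\cM_C^\defparam(\bG,\bN)=\cM_C(\bG,\bN)$; the required properties are then exactly Theorem~\ref{thm: BFN theorem}. The remaining two have all $\bv_i=1$ and so correspond to abelian gauge theories, whose Coulomb branches---together with all of their flavour GIT variations $\cM_C^\defparam$---are toric hyperk\"ahler (hypertoric) varieties by \cite[\S4(vii)]{BFN1}. I expect the only real obstacle to lie here: in Theorem~\ref{main thm} we used the unimodularity criterion \cite[Proposition~4.11]{BellamyKuwabara} to produce a \emph{smooth} resolution for \emph{generic} $\defparam$, whereas now we need normality and the symplectic-on-smooth-locus property for \emph{every} $\defparam$. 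This follows from the general fact that hypertoric varieties attached to an arbitrary arrangement are normal with symplectic singularities; concretely, $(\GL(1),\CC^{\,n})$ gives the partial resolutions of the $A_{n-1}$ surface singularity $\CC^2/(\ZZ/n\ZZ)$, while $(\GL(1)\times\GL(1),\CC)$ is already smooth. Having verified all factors, Theorem~\ref{criterion: normal symplectic} delivers the claim for $\cM_C^\defparam(Q,\bv,\bw)$.
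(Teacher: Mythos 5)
Your proposal is correct, and its skeleton is identical to the paper's proof: irreducibility via Lemma~\ref{cor: integral}, then Theorem~\ref{criterion: normal symplectic} applied over $t\in\ft\setminus\ft^{(2)}$, with the reduction to connected factors via Proposition~\ref{lemma: components} and the classification of $\codim_{Q,\bv,\bw}(0)\le 1$ data via Lemma~\ref{lemma: classification} (your four-item list agrees with the paper's three displayed quivers, since the paper's framed node $[n]\to 1$ allows $n=0$). Where you genuinely diverge is in disposing of the base cases. The paper handles all of them uniformly by observing they are partially resolved bow varieties in the sense of \S\ref{example 2} and citing Nakajima--Takayama for normality and the symplectic property (with a parenthetical remark that the finite type A perspective of \S\ref{example 1} also works). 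You instead split the list: for the pure theories $(\GL(1),0)$ and $(\GL(2),0)$ you note that $\bF$ is trivial, so $\cM_C^\defparam=\cM_C^0=\cM_C$ and Theorem~\ref{thm: BFN theorem} applies directly --- a cleaner observation than invoking bow varieties; for the abelian theories you use the hypertoric description of \cite[\S 4(vii)]{BFN1} and the fact that hypertoric varieties at arbitrary integral parameters are normal with symplectic singularities, which for $(\GL(1),\CC^n)$ amounts to the elementary statement that partial resolutions of the $A_{n-1}$ du Val surface have at worst ADE singularities. Your route is more self-contained and avoids the bow-variety machinery entirely, at the cost of needing the (true, but external) general hypertoric normality/symplectic-singularity fact for non-generic $\defparam$ --- precisely the point you correctly flag as the crux, since Theorem~\ref{main thm} only used the unimodularity criterion of \cite[Proposition 4.11]{BellamyKuwabara} for generic parameters. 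One small remark: your assertion that $(\GL(1)\times\GL(1),\CC)$ is ``already smooth'' for every $\defparam$ deserves a word of justification --- e.g.~by the equivalence (\ref{eq: equivalence}) every edge parameter $\defparam_e$ on this unframed $A_2$ quiver can be gauged to $0$, so $\cM_C^\defparam\cong\cM_C$ --- but since your cited hypertoric fact already yields normality and the symplectic property there regardless of smoothness, this does not create a gap.
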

\begin{proof}
Note that irreducibility follows from Lemma \ref{cor: integral}. The proof of the remaining properties is similar to the proof of Theorem \ref{section: proof} above, but appealing now to Theorem \ref{criterion: normal symplectic}. We must study $t \in \ft \setminus \ft^{(2)}$, and show that in the product decomposition from Proposition \ref{lemma: components}, all factors $\cM_C^\defparam(Q,\bv^{(\ell)}, \bw^{(\ell)})$ are normal and symplectic on their smooth loci. Recall that $t \in \ft \setminus \ft^{(2)}$ means that $\codim_{Q,\bv,\bw}(t) =  \sum_\ell \codim_{Q, \bv^{(\ell)}, \bw^{(\ell)}}(0)\leq 1$.  By Lemma \ref{lemma: classification}, we see that all $(Q,\bv^{(\ell)},\bw^{(\ell)})$ belong to the following short list:
$$
\begin{tikzcd}[sep=small,cells={nodes={draw=black, ellipse,anchor=center,minimum height=2em}}]
|[draw=black, rectangle]| n \ar[d] \\
1
\end{tikzcd}
\qquad
\begin{tikzcd}[sep=small,cells={nodes={draw=black, ellipse,anchor=center,minimum height=2em}}]
2
\end{tikzcd}
\qquad
\begin{tikzcd}[sep=small,cells={nodes={draw=black, ellipse,anchor=center,minimum height=2em}}]
1 \ar[r] & 1
\end{tikzcd}
$$
Thus all $\cM_C^\defparam(Q,\bv^{(\ell)},\bw^{(\ell)})$ are partially resolved bow varieties,  as discussed in \S \ref{example 2}.  Thus they are normal, and symplectic on their smooth loci, by the results of \cite{NakTak}.  (One can also deduce these properties from the finite type A perspective of \S \ref{example 1}.) This completes the proof.
\end{proof}

\subsection{Extension to all quivers}
\label{section: extending}

It is natural to ask whether the proofs given above extend to all quiver gauge theories, allowing loops and/or multiple edges.  The required steps are simple to state: first, we must allow multiple edges and/or loops wherever possible for all quivers in Figure \ref{figure}.  For each resulting quiver, we should show that $\cM_C^\defparam(\bv, \bw)$ is smooth and symplectic for generic $\defparam$ (or at least that it is normal, symplectic on its smooth locus, and has singular locus of codimension $\geq 4$). 

By a similar argument to Remark \ref{rem: quiver manipulations}, we can rewrite each of these new quivers in terms of products of simpler ones: toric hyperk\"ahler varieties (which we omit), or one of

\begin{equation}
\begin{tikzcd}[sep=small,cells={nodes={draw=black, ellipse,anchor=center,minimum height=2em}}]
|[draw=black, rectangle]| n \ar[d] \\
2 \arrow[out=-50,in=-130,loop,distance = 3em, start anchor={south east}, end anchor = {south west}, "r"]
\end{tikzcd}
\begin{tikzcd}[sep=small,cells={nodes={draw=black, ellipse,anchor=center,minimum height=2em}}]
|[draw=black, rectangle]| n \ar[d] \\
3 \arrow[out=-50,in=-130,loop,distance = 3em, start anchor={south east}, end anchor = {south west}, "r"]
\end{tikzcd}
\begin{tikzcd}[sep=small,cells={nodes={draw=black, ellipse,anchor=center,minimum height=2em}}]
4 \arrow[out=-50,in=-130,loop,distance = 3em, start anchor={south east}, end anchor = {south west}, "r"]
\end{tikzcd}
\begin{tikzcd}[sep=small,cells={nodes={draw=black, ellipse,anchor=center,minimum height=2em}}]
|[draw=black, rectangle]| n \ar[d] & |[draw=black, rectangle]| m \ar[d] \\
2 \arrow[out=-50,in=-130,loop,distance = 3em, start anchor={south east}, end anchor = {south west}, "r"]
\ar[r,bend left =30] \ar[r, bend right=30] \arrow[r,phantom,shift left =0.5ex,"\vdots"] & 1 
\end{tikzcd}
\ \
\begin{tikzcd}[cells={nodes={draw=black, ellipse,anchor=center,minimum height=2em}}]
2 \arrow[out=-50,in=-130,loop,distance = 3em, start anchor={south east}, end anchor = {south west}, "r"]
\ar[r,bend left =25] \ar[r, bend right=25] \arrow[r,phantom,shift left =0.5ex,"\vdots"]& 
2 \arrow[out=-50,in=-130,loop,distance = 3em, start anchor={south east}, end anchor = {south west}, "s"]
\end{tikzcd}
\end{equation}
Dots denote multiple edges, $r,s \geq 0$ indicate multiple loops, and the framing dimensions $n, m \geq 0$.  Proving appropriate smoothness for the above quivers would extend our result to all quivers.

\begin{Remark}
The Coulomb branches for the first and second ``multiloop'' quivers above are denoted $\cM_C(r,2,n)$ and $\cM_C(r,3,n)$ in \cite{FinkGon}.  There, Finkelberg and Goncharov conjecture that these varieties all admit symplectic resolutions, and prove this conjecture for the cases $\cM_C(r,2,1)$.
\end{Remark}

\bibliographystyle{myamsalpha}
\bibliography{symmtype}
\end{document}